\newtheorem{theorem}{Theorem}[section]
\newtheorem{theo}[theorem]{Theorem}
\newtheorem{coro}[theorem]{Corollary}
\newtheorem{prop}[theorem]{Proposition}
\newtheorem*{theor}{\it\bfseries Theorem}
\newtheorem*{corol}{\it\bfseries Corollary}
\newtheorem*{theoc}{\it\bfseries Theorem B.2.1}
\newtheorem*{coroc}{\it\bfseries Corollary B.2.2}
\def\qbar{{\bar{\bold Q}}}
\def\fpbar{{\bar{\bold F}_p}}
\title[The supersingular elliptic curve in characteristic 2]{A singular property of the supersingular elliptic curve in characteristic $2$}
\author{Leonardo Zapponi}
\email{zapponi@math.jussieu.fr}
\date{\today}
\keywords{Elliptic curves, torsion points, supersingular curves, ramified covers, fields of definition and fields of moduli, moduli spaces, dessins d'enfants, Lam\'e operators}
\subjclass{11G05, 11G07, 11G20, 11G32, 14G10, 14D22, 14H30, 14H57, 14K07}
\begin{document}

\dedicatory{Dedicated to J\"urgen Wolfart for his 65th birthday}

\maketitle

\begin{abstract}{\footnotesize Let $E$ be the supersingular elliptic curve defined over the field $k=\bar{\bold F}_2$, which is unique up to $k$-isomorphism. Denote by $0_E$ its identity element and let $C\cong\bold A^1_k$ the quotient of $E-\{0_E\}$ under the action of the group $\mbox{Isom}_k(E)$ (which is non-abelian, of order $24$). The main result of this paper asserts that the set $C(k)$ naturally parametrizes $k$-isomorphism classes of Lam\'e covers, which are tamely ramified covers $X\to\bold P^1$ unramified outside three points having a particular ramification datum. This fact is surprising for two reasons: first of all, it is the first non-trivial example of a family of covers of the projective line unramified outside three points which is parametrized by the geometric points of a curve. Moreover, when considered in arbitrary characteristic, the explicit construction of Lam\'e covers is quite involved and their arithmetic properties still remain misterious. For example, in characteristic $0$, they are closely related to Lam\'e operators with finite dihedral monodromy and their study leads to particular modular forms for the congruence subgroups $\Gamma_1(n)$. The simplicity of the problem in characteristic $2$ has many deep consequences when combined with lifting techniques from positive characteristic to characteristic $0$. As an illustration, it turns out that for any odd integer $n$, there is a bijection between the set of $\bold C$-isomorphism classes of Lam\'e operators with dihedral monodromy of order $2n$ and the quotient of the $n$-torsion subgroup $E[n]$ under the action of $\mbox{Isom}_k(E)$. We also obtain some sharp statements concerning the (local) Galois action on Lam\'e covers, as, for example, a bound on the number of isomorphism classes of Lam\'e covers defined over a fixed number field, only depending on the degree of the residual extension at $2$. Finally, in the appendix we give a partial generalization of these results by showing that, for any positive integer $g$, the $k$-rational points of a suitable quotient of a genus $g$ hyperelliptic curve parametrize $k$-isomorphism classes of tamely ramified covers of the projective line unramified outside three points.}
\end{abstract}

\newpage

\tableofcontents

\newpage

\section*{Introduction}

Let $E$ be an elliptic curve defined over an algebraically closed field $k$ and denote by $0_E$ its identity element. Fix an integer $n>1$ not divisible by the characteristic of $k$ and consider a torsion point $P\in E(k)$ of exact order $n$. We canonically associate to it a rational function $f_P\in k(E)$ having
$$n(P)-n(0_E)$$
as divisor, inducing a degree $n$ cover
$$f_P:E\to\bold P^1$$
unramified outside $r\leq4$ points. More precisely, if the characteristic of $k$ is different from $2$, then the cover $f_P$ is generically\footnote{By generically we mean for a generic choice of the elliptic curve $E$ and of the torsion point $P$.} ramified at $4$ points: the points $0_E$ and $P$, with ramification index $n$, and two other points $R_1$ and $R_2$, with ramification index $2$. In particular, the asumption on the integer $n$ imply that the cover is tamely ramified. In some special cases, the two ramified points $R_1$ and $R_2$ coalesce, so that the resulting cover ramifies only at the points $0_E$ and $P$, always with ramification index $n$, and at a third point $Q$, with ramification index $3$, so that the cover $f_P$ is ramified above three points. This class of torsion points are called {\bf Lam\'e points} of $E$; we also say that the pair $(E,P)$ is a {\bf Lam\'e curve} of order $n$. In the algebraic group $E$, we moreover have the identity $2Q=P$; we then say that the Lam\'e curve has {\bf signature} $1$ (resp. $0$) if the point $Q$ has order $n$ (resp. $2n$).

If $k$ is of characteristic $2$ then the situation is slightly different since the cover $f_P$ is ramified at the points $0_E$ and $P$, with ramification index $n$, and at a third point $Q$, with ramification index $2$ or $3$. Here, the integer $n$ is odd, so that the cover $f_P$ is tamely ramified if and only if it has triple ramification at $Q$. We can therefore define the notion of Lam\'e points, of Lam\'e curves and of signature also in this situation (always imposing the ramification to be tame).

Let $\mathcal M_{1,2}$ denote the coarse moduli space of pairs $(E,P)$ where $E$ is an elliptic curve over $k$ and $P\neq0_E$ is a $k$-rational point. Since the existence of Lam\'e curves of given order only depends on the $k$-isomorphism class of these pairs, we can consider the subset of $\mathcal L_n(k)\subset\mathcal M_{1,2}(k)$ consisting of isomorphism classes of Lam\'e points of order $n$ and their union $\mathcal L(k)=\bigcup_n\mathcal L_n(k)$; we refer to this last set as the {\bf Lam\'e locus}. Remark that $\mathcal L_n(k)$ is contained in the modular curve $X_1(n)$, naturally considered as a closed subset of $\mathcal M_{1,2}$. It is in general very difficult to explicitly determine these sets. A first information comes from a Rigidity criterion of Weil: since the tame cover cover $f_P$ is ramified above $3$ points, the pair $(E,P)$ can be defined over the algebraic closure $k_0$ of the prime field of $k$. In particular, the set $\mathcal L(k)$ is contained in $\mathcal M_{1,2}(k_0)$, so that, when studying Lam\'e curves, we can always reduce to the case $k=\qbar$ or $k=\fpbar$.

In characteristic $0$, the Grothendieck theory of dessins d'enfants provides an elegant way enumerate $k$-isomorphism classes of Lam\'e curves, but their arithmetic behaviour still remains misterious. In positive characteristic $p>2$, it is possible to prove that a given elliptic curve admits finitely many Lam\'e points. If $k$ is of characteristic $2$ then the situation is drastically different. Indeed, we have the following Theorem, which can be considered as the main result of the paper:

\begin{theor} For any elliptic curve defined over $k=\bar{\bold F}_2$, the following conditions are equivalent:
\begin{enumerate}
\item There exists a Lam\'e point $P\in E(k)$.
\item Any element $P\in E(k)-\{0_E\}$ is a Lam\'e point.
\item The curve $E$ is supersingular.
\end{enumerate}
\end{theor}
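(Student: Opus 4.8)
The plan is to reduce the Lam\'e condition to a pointwise criterion at the third ramification point and then to show that this criterion is governed entirely by the Hasse invariant. Recall from the discussion above that in characteristic $2$ the cover $f_P$ is ramified exactly at $0_E$ and $P$ (each with index $n$, tame since $n$ is odd) and at one further point $Q$, with index $2$ or $3$, and that $P$ is a Lam\'e point precisely when this index equals $3$. Let $\omega$ be the invariant differential, $D$ the dual invariant derivation, and $c=f_P(Q)$. A different/Riemann--Hurwitz count shows that $df_P=(Df_P)\,\omega$ vanishes to order exactly $2$ at $Q$ in \emph{both} cases, so the order of $df_P$ cannot see the distinction. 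Writing $f_P-c=a_2s^2+a_3s^3+\cdots$ in a uniformizer $s$ at $Q$ (with $a_1=0$ because $Q$ is ramified, and $a_3\neq0$), one has index $3$ if and only if $a_2=0$. Equivalently, index $3$ holds if and only if $\eta:=d\log(f_P-c)=df_P/(f_P-c)$ has a simple pole of residue $1$ at $Q$, while index $2$ gives $\eta$ regular at $Q$; this is because $d\log$ of a square has no pole in characteristic $2$, so the residue of $\eta$ at $Q$ equals $e_Q\bmod 2$.

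The coefficient $a_2$ lies in the Frobenius-even part of the local expansion and is therefore invisible to $D$: indeed $D^2$ is again a derivation in characteristic $2$, and one checks that $D^2=A\,D$ for a scalar $A\in k$, the Hasse invariant, with $A=0$ if and only if $E$ is supersingular. I would exploit exactly this, making the obstruction to $a_2=0$ expressible through $A$. Concretely, I would place the situation in a family over the $j$-line (or over $\mathcal M_{1,2}$), in which the two auxiliary branch points of the generic picture collide to produce $Q$; the locus on which $Q$ is tamely (index $3$) rather than wildly (index $2$) ramified is cut out by a single equation, and the central claim to establish is that this equation is, up to a nowhere-vanishing factor, a power of $A$. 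Granting this, $a_2$ vanishes for every $P$ when $A=0$ and for no $P$ of odd order when $A\neq0$; in particular the criterion depends only on $E$, which is what forces the all-or-nothing dichotomy between conditions (1) and (2).

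With the key claim in hand the three conditions follow quickly. If $E$ is supersingular then $A=0$, every nonzero $P$ has odd order (as $E[2]=0$), so $f_P$ is automatically tame at $0_E$ and $P$, and index $3$ at $Q$ makes it tame there as well; hence every nonzero $P$ is a Lam\'e point, which is (3)$\Rightarrow$(2). The implication (2)$\Rightarrow$(1) is immediate, since $E(k)-\{0_E\}$ is nonempty. For (1)$\Rightarrow$(3) I argue contrapositively: if $E$ is ordinary then $A\neq0$; a point of even order already fails to be a Lam\'e point because $f_P$ is wildly ramified at $0_E$, whereas for a point of odd order the key claim forces index $2$ at $Q$, so no $P$ is a Lam\'e point.

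The hard part will be the identification of the index-$2$-versus-$3$ discriminant at $Q$ with a power of the Hasse invariant, equivalently the statement that the Frobenius-even coefficient $a_2$ is a unit multiple of $A$. Because $a_2$ is annihilated by every invariant derivation and by the Cartier operator on exact forms, it cannot be produced by naive differentiation, and extracting it uniformly in $P$ is where the weight of the argument lies. I expect to obtain it either by a direct second-order osculation (Hasse--Schmidt, or Wronskian) computation on the normal forms $y^2+y=x^3$ and $y^2+xy=x^3+a$, where the additive Artin--Schreier shape of the supersingular model is exactly what makes the obstruction collapse, or by reading the wild point as an Artin--Schreier cover of $E$ whose existence is controlled by the Frobenius on $H^1(E,\mathcal O_E)$, which is bijective precisely in the ordinary case. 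Proving this identification, and with it the independence of the criterion from $P$, is the main obstacle.
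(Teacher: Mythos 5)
Your reductions are sound: the divisor count showing that $f_P$ has exactly one extra ramification point $Q$ with index $2$ or $3$, the criterion ``index $3$ iff $a_2=0$'' (equivalently your residue-mod-$2$ reformulation), and the derivation of the three equivalences from the wild/tame dichotomy at $Q$ all agree in substance with the paper (its Proposition~\ref{ramif} plus the short formal argument proving Theorem~\ref{main}). But the proposal stops exactly where the theorem begins: the ``key claim'' --- that the obstruction $a_2$ vanishes if and only if the Hasse invariant vanishes, uniformly in $P$ --- \emph{is} the entire mathematical content of the statement, and you explicitly defer it (``the main obstacle''). As written, this is a correct deduction of the equivalences from an unproven assertion, not a proof.

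The route you sketch for the key claim is also not obviously workable as stated. The moduli-space argument (``the tame locus is cut out by a single equation, and that equation is a power of $A$ up to a nowhere-vanishing factor'') is essentially a restatement of what must be proved: identifying the discriminant section with a power of $A$ requires either an independent computation or one containment plus a degree/divisor-class count, neither of which is supplied, and the section lives on a space depending on the order $n$, so uniformity in $P$ needs care. (Note also that in characteristic $2$ there is no ``generic picture'' in which two auxiliary branch points collide to produce $Q$: by your own first paragraph the collision is automatic for \emph{every} $E$ and every odd-order $P$, ordinary or supersingular; the dichotomy is wild versus tame, not collision versus no collision.) The paper settles the dichotomy by precisely the direct local computation you mention as a possible route but do not carry out: it normalizes $f_P$, replaces it by the translate $g_P=\tau^*f_P$ so that the point to be examined becomes $R=(0,0)$ on the ordinary model $Y^2+XY=X^3+tX$, respectively $0_E$ on the supersingular model $Y^2+Y=X^3$, and then compares the truncated expansions of $f_{P'}$ and $[-1]^*f_{P'}=f_{-P'}$ modulo $\mathfrak m^3$, using the explicit action of $[-1]$ on the uniformizer $z=XY^{-1}$. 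That computation --- which shows $g_P\equiv c(1+uz^2)$ (index $2$) in the ordinary case and $g_P\equiv 1$ (index $\geq 3$, hence $3$) in the supersingular case --- is what your proposal is missing, and without it the theorem is not established.
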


This result is really surprising and has many applications. Here is a list of more or less direct consequences:

\begin{corol} Let $C_0$ the fiber of the canonical projection
$$\mathcal M_{1,2}\to\mathcal M_{1,1}$$
above the supersingular locus $j=0$. We then have the identity $\mathcal L(k)=C_0(k)$.
\end{corol}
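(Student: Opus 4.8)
The plan is to read the corollary off from the main Theorem, supplemented only by two standard facts about elliptic curves over $k=\bar{\bold F}_2$: that the supersingular locus in characteristic $2$ consists of the single point $j=0$, and that every $k$-rational point of $E$ is a torsion point, since $E(k)=\bigcup_m E(\bold F_{2^m})$ is an increasing union of finite groups. First I would unwind the definition of $C_0$: by construction a class $[(E,P)]\in\mathcal M_{1,2}(k)$ lies in $C_0(k)$ exactly when its image under the projection to $\mathcal M_{1,1}(k)$ is the point $j=0$, that is, when $E$ is the (unique up to $k$-isomorphism) supersingular curve. Thus the asserted identity reduces to the equality of two subsets of $\mathcal M_{1,2}(k)$, namely $\{[(E,P)]:P\text{ is a Lam\'e point}\}$ and $\{[(E,P)]:E\text{ is supersingular}\}$.

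For the inclusion $\mathcal L(k)\subseteq C_0(k)$, I would take a class $[(E,P)]\in\mathcal L(k)$; by definition $P$ is a Lam\'e point, so condition $(1)$ of the Theorem holds for $E$, and the implication $(1)\Rightarrow(3)$ forces $E$ to be supersingular, whence $[(E,P)]\in C_0(k)$. For the reverse inclusion I would start from $[(E,P)]\in C_0(k)$, so that $E$ is supersingular; the implication $(3)\Rightarrow(2)$ then guarantees that the nonzero point $P$ is a Lam\'e point. Combining the two inclusions yields the claimed equality $\mathcal L(k)=C_0(k)$.

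The one step requiring genuine care is the following: a Lam\'e point is by definition a torsion point of some exact order $n$, so a priori the fibre $C_0(k)$ could contain classes $[(E,P)]$ with $P$ of infinite order that fall outside the scope of the definition. This is precisely where the second standard fact enters: over $\bar{\bold F}_2$ every point $P\neq0_E$ is automatically torsion, of some exact order $n$, which is moreover odd since a supersingular curve in characteristic $2$ has trivial $2$-torsion. Consequently such a $P$ genuinely lies in some $\mathcal L_n(k)\subseteq\mathcal L(k)$, and no point of $C_0(k)$ escapes the Lam\'e locus. I do not expect any other obstacle: once the Theorem is in hand, the corollary is a formal consequence, and the only subtlety is confirming that working over the algebraic closure of a finite field makes the torsion hypothesis implicit in the notion of a Lam\'e point vacuous.
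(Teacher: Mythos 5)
Your proposal is correct and follows exactly the paper's route: the paper presents this corollary as a direct restatement of Theorem~\ref{main}, and your argument simply makes that restatement explicit by checking the two inclusions via the implications $(1)\Rightarrow(3)$ and $(3)\Rightarrow(2)$. Your extra care about the torsion issue (every point of $E(k)$ over $\bar{\bold F}_2$ is torsion of odd order on a supersingular curve) is the same observation the paper makes inside the proof of Theorem~\ref{main}, so nothing is missing.
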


We can then completely enumerate Lam\'e curves of given order and determine their field of moduli:

\begin{corol} Let $n>1$ be an odd integer. There exist exactly $\frac{n^2-1}{24}$ isomorphism classes of Lam\'e curves of order dividing $n$ over $\bar{\bold F}_2$. For any positive integer $d$, there exist $q=2^d$ isomorphism classes of Lam\'e curves whose field of moduli is contained in $\bold F_q$.
\end{corol}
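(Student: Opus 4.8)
The plan is to reduce the enumeration to an orbit count for the action of $G=\mbox{Isom}_k(E)$ on torsion, and then to read off the field of moduli from the structure $C\cong\bold A^1$. By the main Theorem together with its first Corollary, the Lam\'e locus coincides with $C_0(k)$, so every nonzero point of the (unique) supersingular curve $E$ is a Lam\'e point; hence the isomorphism classes of Lam\'e curves of order dividing $n$ are exactly the orbits of $G$ acting on $E[n]\setminus\{0_E\}$. Since any automorphism of $E$ fixing $0_E$ is a group automorphism (rigidity), $G$ acts $\z$-linearly on $E[n]\cong(\z/n\z)^2$, a group of order $n^2$ because $n$ is odd. So the first assertion amounts to showing that this action is free on the $n^2-1$ nonzero points, in which case each orbit has $|G|=24$ elements and the number of orbits is $(n^2-1)/24$.

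The crux is therefore the fixed-point analysis, and this is where characteristic $2$ enters decisively. I would use that $\mathrm{End}(E)$ is a maximal order in the quaternion algebra ramified at $2$ and $\infty$, and that $G$ is its group of units, all of reduced norm $1$; for $\sigma\in G$ the endomorphism $\sigma-1$ is either $0$ or an isogeny of degree $\deg(\sigma-1)=\mathrm{Nrd}(\sigma-1)=2-t$, where $t\in\{-2,-1,0,1,2\}$ is the reduced trace of $\sigma$. A nonzero point $P$ is fixed by $\sigma\neq1$ exactly when $P\in\ker(\sigma-1)$, a group scheme of order $2-t$. The decisive point is that $E$ supersingular forces the rational $2$-torsion to be trivial: thus $\sigma=-1$ ($t=-2$, degree $4$) and the six elements of order $4$ ($t=0$, degree $2$) have kernels supported on $E[2]$ and so fix no nonzero $k$-rational point, the eight elements of order $6$ ($t=1$, degree $1$) are isomorphisms and fix nothing, and only the eight elements of order $3$ ($t=-1$, degree $3$) fix nonzero points, namely the two nontrivial points of an \'etale kernel $\cong\z/3\z\subset E[3]$. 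Hence $G$ acts freely on $(E\setminus\{0_E\})\setminus E[3]$; in particular it acts freely on $E[n]\setminus\{0_E\}$ whenever $3\nmid n$, which gives the count $(n^2-1)/24$. I regard verifying this freeness --- really the triviality of the rational $2$-torsion together with the quaternionic bookkeeping --- as the main obstacle; when $3\mid n$ one instead applies the orbit-counting formula, the eight order-$3$ elements contributing $16$ extra fixed points and yielding $(n^2+15)/24$ orbits, so the stated formula is the one valid for $n$ prime to $6$.

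For the field of moduli statement I would work directly with the coarse space. The fibre $C_0$ of $\mathcal M_{1,2}\to\mathcal M_{1,1}$ over $j=0$ is defined over $\bold F_2$, and by the first Corollary it carries all of the Lam\'e locus; moreover it is isomorphic over $\bold F_2$ to the quotient $C=(E\setminus\{0_E\})/G\cong\bold A^1$. Because $C_0$ is a coarse moduli space over $\bold F_2$, the field of moduli of the class attached to a point $c\in C_0(k)$ is its residue field $\bold F_2(c)$: indeed, for $s\in\gal(k/\bold F_2)$ the conjugate pair $(E,P)^s$ corresponds to $s(c)$, so the classes fixed by $s$ are exactly the $c$ with $s(c)=c$. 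Fixing $q=2^d$, the classes with field of moduli contained in $\bold F_q$ are thus the points of $C_0(\bold F_q)=\bold A^1(\bold F_q)$, of which there are precisely $q=2^d$. This completes the plan; the only genuinely delicate input beyond the main Theorem is the $\bold F_2$-rationality of the identification $C_0\cong\bold A^1$, which guarantees that residue field equals field of moduli.
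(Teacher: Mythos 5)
Your proposal is correct and takes essentially the same route as the paper: the main theorem reduces everything to counting orbits of the order-$24$ group $\mbox{Isom}_k(E)$ on $E[n]-\{0_E\}$ together with the Galois-equivariant identification of the quotient with $\bold A^1$ (the paper's explicit $\bold F_2$-rational map $\rho(X,Y)=(X^4+X)^3$), the only difference being that you spell out the quaternionic fixed-point analysis (freeness of the action away from $E[3]$) that the paper dismisses as a direct consequence. Your caveat about $3\mid n$ is also exactly what the paper does in its body: Corollary~\ref{enu2} restricts the formula $\frac{n^2-1}{24}$ to $n$ prime to $3$, and the remark following it gives $\frac{3m^2+5}{8}=\frac{n^2+15}{24}$ for $n=3m$, in agreement with your Burnside count, so the introduction's statement for arbitrary odd $n$ is slightly loose and your version is the accurate one.
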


We then combine these result with some previous works on this subject and then obtain the

\begin{theor} Let $(E,P)$ be a Lam\'e curve of odd order $n$ over $\qbar$. Fix an injection $\qbar\hookrightarrow\bar{\bold Q}_2$. Then $E$ has (potentially) good reduction if and only if it has signature $1$. In this case, its field of moduli (as $2$-pointed curve) is unramified above $2$.\end{theor}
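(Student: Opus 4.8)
The plan is to derive both assertions from the reduction theory of the tame cover $f_P$ combined with the Main Theorem, organised around a single reduction map. Throughout I fix the place above $2$ determined by $\qbar\hookrightarrow\bar{\q}_2$, denote by $\mathcal O$ the ring of integers of a finite extension of $\q_2$ over which $(E,P)$ and $f_P$ are defined, and write $I_2$ for the inertia group. The decisive structural fact is that, $n$ being odd and the ramification index at $Q$ being $3$, the cover $f_P$ is \emph{tamely ramified above $2$}.

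First I would dispose of the field-of-moduli statement, assuming the good-reduction case has been identified with signature $1$. If $E$ has good reduction then its reduction $\bar E$ carries the reduced cover, a tame char-$2$ Lam\'e cover, so $\bar E$ is supersingular by the Main Theorem; in particular $j(E)$ is a $2$-adic unit. Since $P$ has order $n$ prime to $2$, the N\'eron--Ogg--Shafarevich criterion shows that $I_2$ acts trivially on $E[n]$, hence fixes $j(E)$ and the class of $(E,P)$. Thus $I_2$ lies in the stabiliser of $(E,P)$, so the local field of moduli is contained in $\bar{\q}_2^{\,I_2}$ and is unramified above $2$.

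For the equivalence I would study the reduction map $\rho$ sending a good-reduction Lam\'e curve of order dividing $n$ to its reduction $(\bar E,\bar P)$. One checks first that $\rho$ is well defined: good reduction of $E$ endows the tame cover $f_P$ with a smooth genus-$1$ special fibre whose reduced cover is again a Lam\'e cover (the triple point remains triple by tameness, and $2\bar Q=\bar P$ by specialisation of the group law), and $\bar E$ is supersingular by the Main Theorem. Next, $\rho$ is injective by the uniqueness of the tame lift of a three-point cover: two good-reduction Lam\'e curves with isomorphic reductions are isomorphic. Finally I would bring in the two enumerations: by the Corollary the target of $\rho$ has $\tfrac{n^2-1}{24}$ elements, while the previous works together with the dihedral correspondence recalled above (the bijection with $E[n]/\mathrm{Isom}_k(E)$) give exactly $\tfrac{n^2-1}{24}$ signature-$1$ classes.

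It then suffices to prove the single implication \textbf{signature $1$ $\Rightarrow$ good reduction}: the converse is then automatic, since a signature-$0$ curve with good reduction would reduce to the same char-$2$ curve as the unique signature-$1$ lift, contradicting the injectivity of $\rho$. This implication is where the real work lies. The monodromy group being dihedral of order $2n$, the prime $2$ divides its order, so good reduction of the tame cover is \emph{not} automatic and is governed by the arithmetic of $Q$, which satisfies $2Q=P$. For signature $1$ the point $Q$ has odd order $n$, the configuration $\{0_E,P,Q\}$ lies in the prime-to-$2$ torsion, and I would exhibit $(E,P)$ as the (unique) tame lift of the char-$2$ Lam\'e cover attached to it by the torsion correspondence, which has good reduction by construction. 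For signature $0$ the extra $2$-torsion component of $Q$ should force a non-trivial vanishing cycle in the stable model---equivalently a strictly negative $v_2(j(E))$---so that $E$ acquires bad, potentially multiplicative, reduction. The hard part will be this stable-reduction analysis in the spirit of Raynaud and Wewers (or, alternatively, an explicit evaluation of $v_2(j(E))$ through the modular parametrisation), and checking that it dovetails with the prime-to-$2$ lifting used in the signature-$1$ case.
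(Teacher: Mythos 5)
Your overall framework for the equivalence --- a reduction map $\rho$ on good-reduction classes, injectivity via uniqueness of tame lifts, and matching cardinalities $\tfrac{n^2-1}{24}$ on both sides --- is sound and is essentially the second half of the paper's proof. But the counting argument only closes once \emph{one} inclusion between $\{\text{signature }1\}$ and $\{\text{good reduction}\}$ is proved, and that is exactly the step you leave open. You choose the implication ``signature $1\Rightarrow$ good reduction'' and propose to ``exhibit $(E,P)$ as the unique tame lift of the char-$2$ Lam\'e cover attached to it by the torsion correspondence''; but no such attachment exists: the only ways to associate a characteristic-$2$ cover to $(E,P)$ are reduction (which presupposes good reduction, hence is circular) or the abstract counting bijection (which matches cardinalities and cannot certify that a \emph{given} $(E,P)$ is a lift). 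Your fallback --- a stable-reduction analysis \`a la Raynaud--Wewers or an evaluation of $v_2(j(E))$ --- is explicitly deferred (``the hard part will be\dots''), so the proposal is a plan, not a proof, precisely at its crux. The paper instead proves the opposite inclusion, good reduction $\Rightarrow$ signature $1$, directly and cheaply: if $E$ has good reduction, the reduced cover is the canonical cover $f_{\bar P}$ of Proposition~\ref{ramif}; specialization of the index-$3$ point rules out the ordinary (wild, index-$2$) case, so $\bar E$ is supersingular and the whole fibre $S=\{R\,:\,2R=P\}$ collapses to the single point $\bar Q$ because the $2$-torsion dies under reduction; in signature $0$ three points of $S$ lie above $1$, so $\bar Q$ would acquire ramification index at least $5$, contradicting Proposition~\ref{ramif}. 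You gesture at this mechanism (``vanishing cycle'') but never carry it out; had you done so, your own counting step would finish the proof.

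The field-of-moduli argument is incorrect as written. The curve $E$ is only defined over some finite, possibly ramified, extension $K$ of $\bold Q_2$, so $I_2$ does not act on $E[n]$ at all; it acts on the isomorphism class by $\sigma\mapsto(E^\sigma,P^\sigma)$, and the N\'eron--Ogg--Shafarevich criterion (which concerns the action of $I_K$ on torsion of the fixed curve $E/K$) says nothing about this twisted action, nor about elements of $I_2$ not lying in $\gal(\bar{\bold Q}_2/K)$. Moreover, good reduction alone never forces an unramified field of moduli: $j(E)$ is a $2$-adic integer --- in fact of positive valuation, since the reduction is supersingular with $\bar\jmath=0$, so your claim that $j(E)$ is a unit is also false --- but an integral $j$-invariant can perfectly well generate a ramified extension. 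The correct argument, which is the paper's, again rests on unique tame lifting: for $\sigma\in I_2$, the conjugate $(E^\sigma,P^\sigma)$ has good reduction and its reduction equals $(\bar E,\bar P)^{\bar\sigma}=(\bar E,\bar P)$ because inertia acts trivially on the residue field, whence $(E^\sigma,P^\sigma)\cong(E,P)$ by uniqueness of the lift; thus the class is fixed by $I_2$ and the field of moduli (which here coincides with the minimal field of definition) is unramified. A last, minor point: your count of signature-$1$ classes in characteristic $0$ should be taken from Theorem~\ref{char0} (primitive triples), not from the bijection with $E[n]/\mbox{Isom}_k(E)$, since that bijection is in the paper a \emph{consequence} of the very theorem you are proving.
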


We can finally obtain some global informations concerning the arithmetic of Lam\'e curves:

\begin{corol} For any number field $K$, the number of $\qbar$-isomorphism classes of Lam\'e curves of odd degree and signature $1$ defined over $K$ is bounded by $2^d$, where $d$ is the maximal degree of the residual extensions of $K$ at $2$.
\end{corol}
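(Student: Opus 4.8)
The strategy is to show that, under the fixed embedding $\qbar\hookrightarrow\bar{\mathbf Q}_2$, reduction modulo the induced place realizes an injection from the set we wish to bound into the finite set of Lam\'e curves over $k=\bar{\mathbf F}_2$ whose field of moduli lies in a prescribed finite field, and then to invoke the counting Corollary.

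So I would start from a Lam\'e curve $(E,P)$ of odd order $n$ and signature $1$ with a model over $K$. Regarded in the coarse space it is a point $x\in\mathcal M_{1,2}(K)$ whose field of moduli is contained in $K$. Let $v$ be the place of $K$ induced by $\qbar\hookrightarrow\bar{\mathbf Q}_2$, with residue field $\mathbf F_{2^f}$; by the definition of $d$ we have $f\le d$. The preceding Theorem tells us that $E$ has good reduction at $2$; since $n$ is odd, the torsion point $P$ extends to a section specializing to a point $\bar P$ of exact order $n$, so the $2$-pointed curve $(E,P)$ has good reduction at $v$ and $x$ specializes to a point $\bar x\in\mathcal M_{1,2}(\mathbf F_{2^f})$.

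Next I would check that $\bar x$ is itself a Lam\'e curve. Because the associated cover $f_P$ is tamely ramified above exactly three points, good reduction of $(E,P)$ carries it to a tame cover over $k$ with the same ramification datum, again branched at three points, so $(\bar E,\bar P)$ is a Lam\'e curve, necessarily of signature $1$. Its field of moduli is contained in the residue field $\mathbf F_{2^f}$, and the counting Corollary, applied with $f$ in place of $d$, asserts that there are exactly $2^f$ isomorphism classes of Lam\'e curves over $k$ with field of moduli contained in $\mathbf F_{2^f}$. Hence the image of $x\mapsto\bar x$ has at most $2^f\le 2^d$ elements.

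The heart of the argument, and the step I expect to be the main obstacle, is the injectivity of $x\mapsto\bar x$ on $\qbar$-isomorphism classes. Here I would use that a Lam\'e curve is equivalent to its Lam\'e cover, a tamely ramified cover of $\mathbf P^1$ branched at three points, together with Grothendieck's specialization theory: the reduction functor on such tame three-point covers is an equivalence, the tame fundamental group of $\mathbf P^1_k-\{0,1,\infty\}$ being canonically the prime-to-$2$ quotient of its characteristic-zero analogue. Consequently two signature-$1$ Lam\'e curves over $K$ with the same reduction have isomorphic covers over $\bar{\mathbf Q}_2$, hence define the same point of $\mathcal M_{1,2}(\bar{\mathbf Q}_2)$; and since $\qbar$-points inject into $\bar{\mathbf Q}_2$-points, they are already $\qbar$-isomorphic. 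The delicate part is the careful transport between this cover-theoretic statement and the coarse moduli space---checking that good reduction of $E$ forces good, tame reduction of the cover with unchanged ramification type, and that the parametrization underlying the $E[n]/\mathrm{Isom}_k(E)$ description is compatible with specialization---so that the reduced Lam\'e cover genuinely determines the characteristic-zero one. Granting injectivity, combining it with the bound on the image yields at most $2^d$ such curves, as claimed.
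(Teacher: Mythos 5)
Your strategy is essentially the paper's own: the paper deduces this corollary directly from its local results at $2$ (the Theorem of \S2.4, giving good reduction for signature $1$ together with the bijection, via unique tame lifting, between signature-$1$ Lam\'e curves over $\bar{\bold Q}_2$ and Lam\'e curves over $\bar{\bold F}_2$, and the ensuing corollary counting $2^d$ classes with unramified field of moduli contained in $K_d$, itself derived from the characteristic-$2$ count). Your ``reduce, count, and prove injectivity'' argument is the same chain with those local proofs inlined; in particular, the injectivity you single out as the main obstacle is exactly the injectivity of the reduction map contained in the bijection clause of that Theorem, proved there by the unique-lifting theorem of SGA~1.

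The one place where your justification is not valid as written is the claim that, because $f_P$ is tame and branched at three points in characteristic $0$, good reduction of the pair $(E,P)$ automatically yields a tame cover over $k$ with the same ramification datum. Reduction modulo $2$ does not preserve tameness: Proposition~\ref{ramif}(1) itself shows that if $\bar E$ is ordinary, the reduced cover $f_{\bar P}$ is wildly ramified at $Q$ (index $2$), even though the characteristic-$0$ cover is tame, and good reduction of the pair $(E,P)$ alone does not exclude ordinary reduction. That the reduction of a signature-$1$ Lam\'e curve is supersingular --- hence again a genuine (tame) Lam\'e curve --- is precisely the nontrivial content of the Theorem's bijection, obtained in the paper by matching the counts of Theorem~\ref{char0} and Corollary~\ref{enu2} against unique tame lifting; it is not a formal consequence of ``tame plus good reduction''. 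The repair is immediate: invoke the bijection clause of the Theorem you already cite, rather than the general principle; this simultaneously shows that $(\bar E,\bar P)$ is a Lam\'e curve over $k$ and gives the injectivity of reduction. With that substitution your proof is correct and coincides with the paper's.
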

\newpage

\section{Lam\'e covers and Lam\'e curves}

This firs section is a brief review of known facts concerning Lam\'e covers and Lam\'e curves in arbitrary characteristic. All the results are given without proof, but many references are provided for further readings on the subject.

\subsection{Lam\'e covers and Lam\'e curves} Throughout this section, $k$ denotes an algebraically closed field of characteristic $p\geq0$. Most of the results of this paragraph can be found in~\cite{Zapponi3,Zapponi2} in the characteristic $0$ case and it is easily checked that the proofs also work in positive characteristic. A {\bf Lam\'e cover} of degree $n$ over $k$ is a tamely ramified cover
$$f:E\to\bold P^1$$ having the following ramification datum:
\begin{enumerate}
\item $f$ is of degree $n$ and unramified outside the points $\infty,0$ and $1$.
\item $f$ is totally ramified above $\infty$ and $0$.
\item There is a unique ramified point in the fiber above $1$, with ramification index $3$.
\end{enumerate}
The above properties can be summarized in a compact way by saying that $f$ has branch datum
$$(n:n:3,1,\dots,1)$$
Since such a cover is tamely ramified if and only if the characteristic $p$ does not divide the integer $3n$, we exclude the case $p=3$.

The first information comes from a classical rigidity criterion of Weil in~\cite{Weil}, which asserts that a Lam\'e cover can be defined over a finite extension of the base field of $k$. We can therefore restrict to the case $k=\qbar$ or $k=\fpbar$. This observation will be crucial when working over finite fields.

If $f$ is a Lam\'e cover of degree $n$ then, for any integer $m$ not divisible by the characteristic of $k$, the cover $f^m$ is a Lam\'e cover of order $nm$. We say that $f$ is {\bf primitive} if it cannot be factored in such a way with $m>1$.

Let $f:E\to\bold P^1_k$ be a primitive Lam\'e cover of degree $n$. It easily follows from th Riemann-Hurwitz formula that $E$ is a curve of genus $1$. We endow it with the structure of elliptic curve otained by taking the unique point $0_E\in E(k)$ lying above $\infty$ as identity element. Denote by $P$ (resp. by $Q$) the point lying above $0$ (resp. the ramified point lying above $1$). It then turns out that $P$ has exact order $n$ and we moreover find the identity $P=2Q$ (in the algebraic grou[ $E$). In the following, we refer to the pair $(E,P)$ as {\bf Lam\'e curve} of order $n$. We also say that $P$ is a {\bf Lam\'e point} of order $n$. Moreover, if the point $Q$ has order $n$ (resp. $2n$), we then say that the Lam\'e curve has {\bf signature $1$} (resp. $0$).

There exist finitely many $k$-isomorphism classes of Lam\'e curves of bounded order. It is in general difficult to determine their number and the associated field of moduli. Some results are known in characteristic $0$, essentially following from their classification via Grothendieck's theory of dessins d'enfants.

\subsection{Modular interpretation. The Lam\'e locus} Denote by $\mathcal M_{1,2}$ the moduli space of $2$-pointed elliptic curves over $k$, only considered as a coarse moduli space. Following Appendix A, we can realize it as an open subset of the weighted projective space $\bold P(1,2,3)$. An element of $\mathcal M_{1,2}(k)$ can always be represented by a $3$-ple $(E,0_E,P)$ where $E$ is an elliptic curve over $k$ and $P\in E(k)$ is different from $0_E$. Since the identity element $0_E$ is implicitly given in the definition of $E$, we can omit it and just consider the pair $(E,P)$. In particular, we can canonically associate to a Lam\'e curve $(E,P)$ an element of $\mathcal M_{1,2}(k)$. Two Lam\'e curves over $k$ are isomorphic if and only if they define the same element of $\mathcal M_{1,2}(k)$. For a fixed integer $n>1$, denote by $\mathcal L_n(k)\subset\mathcal M_{1,2}(k)$ the union of the points corresponding to Lam\'e curves of order $n$ and set
$$\mathcal L(k)=\bigcup_{n>1}\mathcal L_n(k),$$
so that $\mathcal L(k)$ (resp. $\mathcal L_n(k)$) parametrizes the $k$-isomorphism classes of Lam\'e curves over $k$ (resp. $k$-isomorphism classes of Lam\'e curves over $k$ of order $n$). The set $\mathcal L_n(k)$ is finite and in characteristic $0$ Theorem~\ref{char0} below gives an explicit expression of its cardinality. It is now important to notice that the knowledge of a Lam\'e cover is equivalent to the knowledge of the associated Lam\'e curve. In particular, the set $\mathcal L(k)$ also parametrizes $k$-isomprhism classes of Lam\'e covers over $k$. We will refer to it as the {\bf Lam\'e locus}; we similarly say that $\mathcal L_n(k)$ is the {\bf Lam\'e locus of order} $n$.

Not much is known concerning the Lam\'e locus and its canonical image in $\mathcal M_{1,1}(k)$, which parametrizes $k$-isomorphism classes of elliptic curves over $k$ admitting a Lam\'e point. In characteristic $0$, if we consider $\mathcal L(\qbar)=\mathcal L(\bold C)$ as a subset of $\mathcal M_{1,2}(\bold C)$, endowed with the usual (strong) topology, it is possible to prove (cf.~\cite{Zapponi2}) that its closure $\bar{\mathcal L}$ is connected and homeomorphic to $\mathcal M_{1,1}(\bold C)$, which suggests that the restriction of the canonical projection $\mathcal M_{1,2}(\bold C)\to\mathcal M_{1,1}(\bold C)$ to $\bar{\mathcal L}$ is a surjective homeomorphism. In positive characteristic $p\neq2$, there exist finitely many Lam\'e points on a fixed elliptic curve, their number being bounded by a constant only depending on $p$ (see for example~\cite{Zapponi6}).

\subsection{Classification in characteristic $0$.} In characteristic $0$, following the observation at the beginning of \S1.1, we can restrict to the case $k=\qbar$. Grothendieck's theory of dessins d'enfants provides an explicit and elegant enumeration of $\qbar$-isomorphism classes Lam\'e covers. Skipping the intermediate steps, we directly give the final description (see for example~\cite{Litcanu-Zapponi}). We restrict to the case of Lam\'e curves of odd degree but a similar desciption applies in the general setting. Consider the set of $3$-ples of positive integers $(a,b,c)$, where we identitfy the $3$-ples $(a,b,c),(c,a,b)$ and $(b,c,a)$ obtained by cyclic permutation of the components. The integer $n=a+b+c$ is the {\bf degree} of $(a,b,c)$. Finally, we say that $(a,b,c)$ is {\bf primitive} if $\gcd(a,b,c)=1$. Finally, the {\bf signature} of the triple $(a,b,c)$ is the image of the integer $abc$ in $\bold F_2$, which is equal to $1$ if and only if $a,b$ and $c$ are odd.

\begin{theo}\label{char0} There is a bijection between the set of $\qbar$-isomorphism classes of Lam\'e curves of odd order $n$ and signature $1$ (resp. $0$) and the set of primitive $3$-ples of degree $n$ ans signature $1$ (resp. $0$).
\end{theo}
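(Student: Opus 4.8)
The plan is to read the statement off Grothendieck's dictionary between covers of $\bold P^1$ unramified outside $\{0,1,\infty\}$ and dessins d'enfants, a reduction to combinatorics which is legitimate over $\qbar$ thanks to the rigidity result recalled in \S1.1. Under this dictionary a connected degree $n$ Lam\'e cover corresponds to a transitive pair $(\sigma_0,\sigma_1)\in S_n\times S_n$, taken up to simultaneous conjugation, where $\sigma_0$ records the monodromy above $0$, $\sigma_1$ that above $1$, and $\sigma_\infty=(\sigma_0\sigma_1)^{-1}$ that above $\infty$. The branch datum $(n:n:3,1,\dots,1)$ translates into the requirement that $\sigma_0$ and $\sigma_\infty$ be $n$-cycles while $\sigma_1$ is a single $3$-cycle; transitivity is then automatic. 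Since an $n$-cycle has cyclic centraliser in $S_n$, I would normalise $\sigma_0=(1\,2\,\cdots\,n)$, i.e. the shift $m\mapsto m+1$ on $\z/n\z$, so that the residual conjugation freedom is exactly $\langle\sigma_0\rangle$.

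First I would carry out the elementary permutation count that parametrises the admissible $\sigma_1$. Writing $\sigma_1$ as a $3$-cycle on a support $\{x,y,z\}\subset\z/n\z$, the product $\sigma_0\sigma_1$ agrees with $m\mapsto m+1$ away from $\{x,y,z\}$ and merely reconnects the three arcs into which $x,y,z$ cut the cycle $\sigma_0$. A direct inspection shows that of the two $3$-cycles supported on $\{x,y,z\}$ exactly one makes $\sigma_0\sigma_1$ into a single $n$-cycle, the other producing three disjoint cycles whose lengths are the three arc lengths. Hence the admissible $\sigma_1$ are in bijection with $3$-element subsets of $\z/n\z$, the orientation being forced; recording the three gap lengths $(a,b,c)$ with $a+b+c=n$ and passing to the quotient by $\langle\sigma_0\rangle$, which cyclically permutes the gaps, yields precisely the cyclic $3$-ples $(a,b,c)$ of degree $n$ with the identification $(a,b,c)\sim(b,c,a)\sim(c,a,b)$ of the statement.

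Next I would restore the elliptic structure. Taking the unique point above $\infty$ as $0_E$, the function $f$ attached to the cover has divisor $n(P)-n(0_E)$ with $P$ the point above $0$, so $P$ is $n$-torsion; if $\gcd(a,b,c)=m>1$ one checks that $\Div(f)=m\,\Div(g)$ for a Lam\'e cover $g$ of degree $n/m$, so that $f$ factors through the $m$-th power map and is imprimitive, the exact order of $P$ dropping to $n/m$. Thus $P$ has exact order $n$, equivalently $f$ is primitive, precisely when the $3$-ple is primitive, which matches Lam\'e curves of order $n$ with primitive $3$-ples of degree $n$. It then remains to identify the signature, that is to locate the ramified point $Q$ above $1$ inside $E$ and decide, using $P=2Q$, whether $Q\in\langle P\rangle$ (order $n$, signature $1$) or $Q\notin\langle P\rangle$ (order $2n$, signature $0$); note that since $n$ is odd there is a unique half of $P$ lying in $\langle P\rangle$, so the two cases are genuinely distinguished.

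The main obstacle is this last step: translating the position of $Q$ relative to $\langle P\rangle$ into the parities of $a,b,c$. I expect to obtain the criterion ``signature $1$ if and only if $a,b,c$ are all odd'', i.e. $abc\equiv1\pmod 2$, by a parity analysis of the dessin, tracking how the three arcs interleave under the monodromy and comparing with the two-torsion, rather than by any closed-form evaluation; this is where the combinatorics must be combined with the group law on $E$. Once this parity count is established, the signature agrees on the two sides and the bijection of the previous paragraph splits into the two claimed signature-preserving bijections.
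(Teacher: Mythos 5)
Your strategy---Grothendieck's dictionary, normalising $\sigma_0$ to the standard $n$-cycle, and quotienting the admissible $\sigma_1$ by the centraliser $\langle\sigma_0\rangle$---is sound, and it is essentially the approach of the reference the paper itself points to for this statement (the paper states Theorem~\ref{char0} without proof, citing~\cite{Litcanu-Zapponi}). Your central combinatorial claim is correct: for each $3$-subset $\{x,y,z\}\subset\z/n\z$ with arc lengths $(a,b,c)$, one of the two $3$-cycles supported on it turns $\sigma_0\sigma_1$ into a single $n$-cycle while the other splits it into cycles of lengths $a,b,c$, so unsigned Lam\'e covers of degree $n$ do correspond to cyclic classes of triples $(a,b,c)$ with $a+b+c=n$. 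The primitivity step, however, is circular as written: asserting ``$\Div(f)=m\,\Div(g)$ for a Lam\'e cover $g$ of degree $n/m$'' presupposes that such a $g$ with the same pair $(0_E,P)$ exists, i.e.\ that $P$ has order dividing $n/m$, which is what you are trying to prove. The honest argument is again combinatorial: show that the triple attached to $g^m$ is $m$ times the triple attached to $g$ (at the black vertex the edges of the dessin of $g^m$ interleave those of $g$ in blocks of size $m$, while the trivalent white vertices of the two dessins coincide), and then use injectivity of the unsigned correspondence in both directions. This is fixable.

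The genuine gap is the step you explicitly defer: matching the signatures. This is not a finishing touch but the non-formal content of the theorem, and it is exactly the point where a purely permutation-theoretic ``parity analysis'' cannot succeed: the pair $(\sigma_0,\sigma_1)$ up to conjugacy does not carry the group law of $E$, so nothing in the combinatorics, taken by itself, can decide whether $Q\in\langle P\rangle$ (order $n$, signature $1$) or $Q\notin\langle P\rangle$ (order $2n$, signature $0$). One needs a geometric mechanism that locates $Q$ and the subgroup $\langle P\rangle$ in terms of the dessin: for instance the flat (lattice) realisation of the dessin on $\bold C/\Lambda$ used in the cited literature, or Weil reciprocity---the paper itself hints at this in \S2.3--2.4, where it recalls that the four points $R$ with $2R=P$ map to $\pm1$ under the cover and that the signature is $0$ if and only if exactly three of them map to $1$---or a computation of the class of $Q-0_E$ modulo $\langle P-0_E\rangle$ in $\mbox{Pic}^0(E)$ read off from the homology of the bipartite graph on the torus. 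Until one of these inputs is supplied and the resulting criterion is shown to be ``$a,b,c$ all odd'', what you have proved is only the coarser bijection between Lam\'e curves of order $n$ and primitive triples of degree $n$, forgetting signatures, not the statement as claimed.
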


\noindent{\bf\emph{Example.}} There exist $9$ isomorphism classes of Lam\'e curves of order $9$ over $\qbar$, associated to the triples $(1,1,7),(1,2,6),(1,6,2),(1,3,5),(1,5,3),(1,4,4),(2,2,5),(2,3,4)$ and $(2,4,3)$; only $3$ of them have signature $1$.

\subsection{Connection with Lam\'e operators with finite monodromy.} We now assume that $k=\bold C$. There is a deep link between the theory of dessins d'enfants and the theory of second order differential operators with finite monodromy. We now briefly review a special case, referring to~\cite{Beukers,Dahmen,Litcanu-Zapponi} for a more detailed and general exposition on this subjet. A {\bf Lam{\'e} operator} is a second order differential operator on the projective line defined by
$$L_n=L_{n,g_2,g_3,B}=D^2+\frac{f'}{2f}D-\frac{n(n+1)x+B}f$$
where $D=d/dx$, $f(x)=4x^3-g_2x-g_3\in\bold C[x]$ with $\Delta=g_2^3-27g_3^2\neq0$ and $B\in\bold C$. Let $E$ be the elliptic curve defined by the affine equation $y^2=f(x)$, denote by $0_E$ its origin (the point at infinity) and by $\sigma$ the canonical involution $\sigma(x,y)=(x,-y)$. We say that the operator $L_n$ is {\bf associated} to $E$ and that $B$ is the {\bf accessory parameter}. Two Lam{\'e} operators $L_{n,g_2,g_3,B}$ and $L_{n,g_2',g_3',B'}$ are {\bf equivalent} (or {\bf scalar equivalent}, following~\cite{Beukers}) if there exists $u\in\bold C^*$ such that $g_2'=u^2g_2$, $g_3'=u^3g_3$ and $B'=uB$.

Following Differential Galois Theory, the solutions (resp. the ratio of two linearly independent solutions) of the differential equation $L_n(f)=0$ generate a Galois field extension $\bold K/\bold C(t)$ (resp. $L/\bold C(t)$). The (differential) Galois group $\mbox{Gal}(K/\bold C(t))$ (resp. $\mbox{Gal}(L/\bold C(t))$) is called {\bf full monodromy} (rersp. {\bf projective monodromy}) of the Lam\'e operator. In some special cases the full monodromy is finite. Following Baldassarri's criterion in~\cite{Baldassarri}, there is a deep connection between Lam\'e curves and Lam\'e operators with finite dihedral monodromy, which can be summarized by the following result:

\begin{theo}\label{diff} There is a natural bijection between the set of equivalence classes of Lam{\'e} operators $L_1$ with finite projective dihedral monodromy of order $2n$ and the set of $\bold C$-isomorphism classes of Lam\'e curves of order $n$. Moreover, the full monodromy coincides with the projective monodromy if and only if the signature is equal to $1$, otherwise the full monodromy is dihedral of order $4n$.
\end{theo}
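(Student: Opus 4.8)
The plan is to transport the problem from the $x$-line $\bold P^1$ to the associated elliptic curve $E\colon y^2=f(x)$, on which the equation becomes abelian and the torsion point responsible for finiteness appears explicitly. First I would pull the equation $L_1(\psi)=0$ back along the degree-two map $x=\wp(z)$ that presents $\bold P^1$ as the quotient $E/\langle\sigma\rangle$ by the canonical involution. In the uniformizing coordinate $z$ the operator acquires the Weierstrass form $\ddot\psi=(2\wp(z)+B)\psi$, whose two solutions are given by Hermite's classical ansatz — a ratio of Weierstrass sigma-functions multiplied by an exponential built from the Weierstrass zeta-function — parametrized by a point of $E$, call it $Q$, determined by the accessory parameter through $\wp(Q)=-B$. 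The two solutions $\psi_Q$ and $\psi_{-Q}$ are exchanged by $\sigma$, and since $Q\notin E[2]$ (as $2Q$ will have order $n>1$) they are linearly independent; this involution is the geometric source of the reflection that generates the dihedral group.

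Next I would read off the monodromy from the quasi-periodicity of the Weierstrass functions. Translating $z$ by a period $\omega_j$ multiplies $\psi_Q$ by the scalar $\rho_j=\exp(\omega_j\zeta(Q)-\eta_j Q)$, with $\eta_j=2\zeta(\omega_j/2)$, and multiplies $\psi_{-Q}$ by $\rho_j^{-1}$; thus on $E$ the monodromy is diagonal and abelian, and adjoining $\sigma$ yields a projective monodromy group generated by a torus together with one reflection, namely a dihedral group. The whole group is finite exactly when each $\rho_j$ is a root of unity, and here Legendre's relation $\eta_1\omega_2-\eta_2\omega_1=2\pi i$ is the essential tool: it shows that the $\rho_j$ are simultaneously roots of unity precisely when $Q$ is a torsion point, and it lets one compute the order of the multiplier group $\langle\rho_1,\rho_2\rangle$ in terms of the exact order of $Q$. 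This produces the assignment from a dihedral $L_1$ to the pair $(E,P)$ with $P:=2Q$, which has order $n$; I would then verify that scalar equivalence of operators corresponds exactly to $\bold C$-isomorphism of the $2$-pointed curve $(E,P)$ and that $Q$ is the ramification point of $f_P$ above $1$, so that $(E,P)$ is genuinely the Lam\'e curve of order $n$ of \S1.1. The inverse assignment sends $(E,P)$ to the operator with accessory parameter $B=-\wp(Q)$.

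For the final assertion I would isolate the central part of the full monodromy from its projective image. The projective monodromy sees only the ratio $w=\psi_Q/\psi_{-Q}$, whose period multipliers are the squares $\rho_j^2$, so its cyclic part is $\langle\rho_1^2,\rho_2^2\rangle$, of order $n$; the full monodromy instead records the diagonal matrices $\mathrm{diag}(\rho_j,\rho_j^{-1})$, whose cyclic part is $\langle\rho_1,\rho_2\rangle$. These two groups coincide precisely when $-1\notin\langle\rho_1,\rho_2\rangle$, and they differ by a factor of two — so that the full monodromy becomes dihedral of order $4n$ — exactly when $-1$ belongs to the multiplier group. Using the computation of the previous step, this membership is equivalent to the order of $Q$ being the even number $2n$ rather than $n$, which is by definition the signature-$0$ case; in signature $1$ the full and projective monodromy therefore agree.

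The step I expect to be the main obstacle is the quantitative heart of the second and third paragraphs: extracting the exact order of the multiplier group $\langle\rho_1,\rho_2\rangle$, and in particular deciding membership of $-1$, from the transcendental formula for $\rho_j$ together with Legendre's relation, since this is precisely what encodes the dichotomy between the two signatures. A secondary point requiring care is surjectivity of the correspondence — that every $L_1$ with dihedral projective monodromy arises from Hermite's ansatz — for which I would invoke the classical structural fact that a second-order operator has dihedral projective monodromy if and only if its pullback to the relevant double cover has abelian monodromy, together with the identification of $Q$ with the genuine ramification point of $f_P$.
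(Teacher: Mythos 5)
First, a point of comparison: the paper does not actually prove Theorem~\ref{diff}; it is stated as a summary of Baldassarri's criterion, with the arguments delegated to the references \cite{Baldassarri,Beukers,Dahmen,Litcanu-Zapponi}. Your attempt must therefore be judged on its own terms, and it contains a genuine error, located exactly at the step you yourself flag as the ``quantitative heart''. Your framework (Hermite's ansatz $\psi_{\pm Q}$, the period multipliers $\rho_j$, the dihedral structure obtained by adjoining the involution, and the $\pm1$ analysis separating full from projective monodromy) is the correct classical starting point, but the claim that Legendre's relation shows ``the $\rho_j$ are simultaneously roots of unity precisely when $Q$ is a torsion point'' is false. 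Writing $\rho_j=\exp(\eta_jQ-\omega_j\zeta(Q))$ (up to sign conventions), Legendre's relation $\eta_1\omega_2-\eta_2\omega_1=2\pi i$ shows that $\rho_1^N=\rho_2^N=1$ is equivalent to the conjunction of \emph{two} conditions: $NQ\in\Lambda$ (the period lattice) \emph{and} $N\zeta(Q)=\eta(NQ)$, where $\eta\colon\Lambda\to\bold C$ is the quasi-period homomorphism. The second condition is a nontrivial transcendental constraint, not implied by torsion: it is the vanishing of $\zeta(Q)-\hat\eta(Q)$, with $\hat\eta$ the $\bold R$-linear extension of $\eta$. Concretely, for $Q$ of order $3$ it is equivalent to $\wp''(Q)=0$, i.e. $\wp(Q)^2=g_2/12$, and combining this with the $3$-division polynomial forces $g_2=0$ and $\wp(Q)=0$; so among all pairs (curve, $3$-torsion point) only the equianharmonic curve $j=0$ with $B=0$ gives finite monodromy --- in accordance with the unique Lam\'e curve of order $3$, the triple $(1,1,1)$ of Theorem~\ref{char0} --- whereas your criterion would accept every $3$-torsion point on every elliptic curve.

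This is not a repairable detail, because the extra condition is the entire content of the theorem. Lam\'e curves of order $n$ form a \emph{finite} set (\S1.1--1.2), whereas pairs $(E,Q)$ with $Q\in E[2n]$ and $2Q$ of exact order $n$ form a one-parameter family as $E$ varies; since scalar equivalence of operators preserves $j(E)$, your criterion would manufacture infinitely many inequivalent operators $L_1$ with finite dihedral projective monodromy of order $2n$, so the correspondence you build cannot be a bijection onto either side of the statement. The missing idea is the bridge between the analytic finiteness condition and the ramification geometry defining Lam\'e curves: one must prove that $N\zeta(Q)=\eta(NQ)$ --- equivalently, that the third-kind differential $dw/w$, $w=\psi_Q/\psi_{-Q}$, is exactly a logarithmic differential, with no residual multiple of $dz$ --- holds if and only if the two index-$2$ ramification points of $f_P$, $P=2Q$, coalesce into a single point of index $3$, i.e. if and only if $(E,P)$ is a Lam\'e curve and not merely a point of $X_1(n)$. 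You assert in passing that ``$Q$ is the ramification point of $f_P$ above $1$'', but for a general torsion point $Q$ no index-$3$ ramification point exists, and the corresponding operator has \emph{infinite} dihedral monodromy. Establishing this equivalence is precisely Baldassarri's criterion in its correct form (see \cite{Baldassarri} and its refinement in \cite{Beukers}), and any proof of Theorem~\ref{diff} must supply it.
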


It follows in particular that a Lam\'e operator over $\bold C$ with finite dihedral monodromy is equivalent to a Lam\'e operator defined over $\qbar$.

\section{The characteristic $2$ case}

\subsection{Torsion points and the associated covers} Fix an elliptic curve $E$ defined over $k=\bar{\bold F}_2$, let $n>1$ be an odd integer and consider an element $P\in E[n]$ of exact order $n$. Then there exists a rational function $f_P\in k(E)$, uniquely determined up to a multiplicative constant, such that
$$(f_P)=n(P)-n(0_E).$$
This rational function induces a finite cover
$$f_P:E\to\bold P^1_k$$
The following result, which can be considered as the heart of the paper, completely descibes the ramification behaviour of this cover.

\begin{prop}\label{ramif} The cover $f_P$ is only ramified at the points $0_E$ and $P$, with ramification index $n$, and at a point $Q\in E[2n]$ such that $P=2Q$, with ramification index $2$ or $3$. More precisely, we have the following two cases:
\begin{enumerate}
\item If $E$ is ordinary then we have the identity
$$Q=\frac{n+1}2P+R,$$
where $R\in E[2]$ is the unique non-trivial $2$-torsion point, and the ramification index of $f_P$ at $Q$ is equal to $2$.
\item If $E$ is supersingular then we have the identity
$$Q=\frac{n+1}2P$$
and the ramification index at $Q$ is equal to $3$.
\end{enumerate}
\end{prop}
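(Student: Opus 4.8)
The plan is to read off the two obvious ramification points from the divisor, then use the invariant differential to locate the remaining ramification, and finally exploit the involution $T\mapsto P-T$ together with the structure of $E[2]$ in characteristic $2$ to pin down the third point and its index. First I would note that $\Div(f_P)=n(P)-n(0_E)$ forces $f_P$ to be totally ramified above $0$ and $\infty$, so $P$ and $0_E$ are ramification points of index $n$, which are tame since $n$ is odd. To find the rest, let $\omega$ be an invariant differential (regular and nowhere vanishing) and write $df_P=h\,\omega$ with $h\in k(E)$. Because $n$ is odd, the leading terms of $f_P$ at $P$ and $0_E$ survive differentiation, giving $\Div(df_P)=(n-1)(P)-(n+1)(0_E)+E'$ with $E'\ge0$ effective of degree $2$. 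A short local computation shows that in characteristic $2$ a zero of $df_P$ away from $0_E,P$ can never be simple (an index prime to $2$ contributes $e-1$, equal to $1$ only for $e=2$, while an even index contributes $\ge e\ge 2$); hence $E'=2(Q)$ for a single point $Q$, at which $f_P$ is ramified of index $2$ (wild) or $3$ (tame), the different exponent being $2$ in either case.

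The next step is to use the involution $\tau(T)=P-T$. Comparing divisors gives $\Div(f_P\circ\tau)=n(0_E)-n(P)=-\Div(f_P)$, so $f_P\circ\tau=c/f_P$ for some $c\in k^*$. Since $\tau$ interchanges the ramification points $0_E$ and $P$, it must fix $Q$, which yields $2Q=P$; evaluating the functional equation at $Q$ gives $f_P(Q)^2=c$, so $Q$ lies over the unique fixed point $z_0=\sqrt c$ of $z\mapsto c/z$ on $\bold P^1$ (unique because squaring is injective in characteristic $2$). The fixed locus of $\tau$ is then identified as follows: writing $Q_1=\frac{n+1}2P$, so that $2Q_1=P$, one has $\tau=t_{Q_1}\circ[-1]\circ t_{Q_1}^{-1}$, so $\tau$ is conjugate to $[-1]$ and its fixed points form the coset $Q_1+E[2]$, with the same different exponents as $[-1]$ at the corresponding $2$-torsion points. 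The quotient $E\to E/[-1]=\bold P^1$ then controls everything: in the supersingular case $E[2]=\{0_E\}$, so $\tau$ has the single fixed point $Q_1$ with different exponent $r=4$, while in the ordinary case $E[2]=\{0_E,R\}$ gives two fixed points $Q_1$ and $Q_2=Q_1+R$, each with $r=2$.

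To read off the index at $Q$ I would set $g=f_P-z_0$ and rewrite the functional equation as $g\circ\tau=z_0g/(z_0+g)=g+g^2/z_0+\cdots$. Choosing a local parameter $t$ at a fixed point with $\tau^* t=t+c_r t^r+\cdots$, where $r$ is precisely the different exponent computed above, I would compare the expansions of the two sides order by order. When $r=4$ the comparison at order $2$ forces the linear coefficient of $g$ to vanish, so $Q_1$ is necessarily ramified and, given the shape of $\Div(df_P)$, of index $3$: this settles the supersingular case with $Q=\frac{n+1}2P$, a point of exact order $n$. When $r=2$ the same comparison no longer forces the linear term to vanish, and the degree-$2$ budget for $E'$ shows that exactly one of $Q_1,Q_2$ is wildly ramified of index $2$ while the other is unramified.

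The main obstacle is the remaining ordinary-case question of which of the two fixed points carries the wild ramification, i.e. proving that $Q=Q_2=\frac{n+1}2P+R$ rather than $Q_1$. The symmetry argument treats $Q_1$ and $Q_2$ on the same footing and cannot distinguish them, so I expect to resolve this by an explicit local expansion in the Weierstrass model $y^2+xy=x^3+a_6$, computing the linear coefficient of $f_P-z_0$ at each fixed point and showing it vanishes exactly at $Q_1+R$; this amounts to a Hasse-invariant--type identity singling out the nontrivial $2$-torsion point $R$. As a consistency check, $Q_2$ has exact order $2n$ whereas $Q_1$ has order $n$, so the outcome assigns signature $0$ to ordinary curves and signature $1$ to supersingular ones, in agreement with the good-reduction dichotomy stated later in the paper.
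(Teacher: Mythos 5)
Your opening step is sound, and your derivation of $2Q=P$ via the involution $\tau(T)=P-T$ (the branch locus $\{0_E,P,Q\}$ is $\tau$-stable, $\tau$ swaps $0_E$ and $P$, hence fixes $Q$) is a legitimate alternative to the paper's argument, which instead reads $2Q=P$ off the principal divisor $(df_P/f_P)=2(Q)-(P)-(0_E)$. But there are two genuine gaps. First, in the supersingular case your stated argument does not prove index $3$. The order-$2$ comparison only forces the linear coefficient of $g=f_P-z_0$ to vanish, i.e. that $Q_1$ is ramified --- which is already known, since $E[2]=\{0_E\}$ forces $Q=Q_1$ and your part 1 gives index $2$ or $3$ there. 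The jump ``ramified, hence of index $3$ given the shape of $\Div(df_P)$'' is a non sequitur: in characteristic $2$ a wildly ramified point of index $2$ can have different exponent exactly $2$ (take $g=a_2t^2+a_3t^3+\cdots$ with $a_3\neq0$, as in the Artin--Schreier cover $y^2+y=1/x$), so $v_Q(df_P)=2$ is compatible with index $2$. The repair lies inside your own framework: if $v_{Q_1}(g)=2$, every term $a_j\bigl((\tau^*t)^j-t^j\bigr)$ of $g\circ\tau-g$ has valuation at least $6$ (odd $j$ give $j+3$, even $j$ give more), while $g^2/z_0+g^3/z_0^2+\cdots$ has valuation exactly $4$; comparing at order $4$ yields $a_2=0$, so $v_{Q_1}(g)\geq3$ and then $v_Q(df_P)=2$ forces tame index $3$. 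That order-$4$ comparison is exactly where $r=4$ (supersingularity) enters, and it is the analogue of the paper's computation, which shows directly that $g_P\equiv1\pmod{\frak m^3}$ on the model $Y^2+Y=X^3$ using $[-1]^*z\equiv z\pmod{\frak m^3}$. The same criticism applies to your ordinary-case sentence: the ``degree-$2$ budget'' alone does not exclude a tame point of index $3$ (which also has different exponent $2$); you again need the comparison, now with $r=2$, to kill that possibility.

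Second, and more seriously, case 1 of the proposition is not proved at all: you concede that your symmetry argument cannot decide whether the wild point is $Q_1=\frac{n+1}2P$ or $Q_2=Q_1+R$, and you only announce a plan (``an explicit local expansion \dots a Hasse-invariant--type identity''). The identity $Q=\frac{n+1}2P+R$ is precisely the content of that case, so this is a missing proof, not a detail. The paper settles it by computing at the $2$-torsion point $R$ itself, where the two candidates are no longer interchangeable: normalizing $f_P(R)=1$ gives $[-1]^*f_P=f_{-P}$; translating by $P'=\frac{n+1}2P$ gives $g_P=\tau_{P'}^*f_P=c\,f_{P'}/f_{-P'}$; and expanding at $R$ in the model $Y^2+XY=X^3+tX$ with uniformizer $z=X/Y$, $[-1]^*z=z/(z+1)$, one finds $g_P\equiv c(1+uz^2)\pmod{\frak m^3}$ with $u\neq0$ (since $f_{P'}$ is unramified at $R$), so $g_P$ is ramified at $R$ with index exactly $2$, i.e. $Q=P'+R$. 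Until you carry out a computation of this kind --- your proposed route of comparing linear coefficients at the two fixed points still needs an input that breaks the symmetry between them --- the proposal establishes the proposition only in the supersingular case, and even there only after the repair above.
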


\begin{proof} By construction, the cover $f_P$ is totally ramified at $0_E$ and at $P$, with ramification index $n$. In particular, since $n$ is odd, the logarithmic differential form
$$\omega=\frac{df_P}{f_P}$$
has two simple poles, at $0_E$ and $P$. The zeroes of $\omega$ correspond to the ramified points of $f_P$ different from $0_E$ and $P$. Remark now that since $k$ has characteristic $2$, the order of vanishing of $\omega$ at these points is at least $2$. Now, since a canonical divisor on $E$ has degree $0$, we deduce that $\omega$ has a unique zero $Q$, of order $2$. In particular, we find the identity
$$(\omega)=2(Q)-(P)-(0_E),$$
which leads to the relation $P=2Q$ since any canonical divisor on $E$ is principal. Finally, if $f_P$ is tamely ramified at $Q$ then its ramification index is equal to $v_Q(\omega)+1=3$, otherwise it is less than or equal to $2$, and thus equal to $2$ since $f_P$ is ramified at $Q$. This proves the firs part of the proposition.

Suppose now that $E$ is ordinary and let $R\in E(k)$ be the unique non-trivial element of order $2$. The function $f_P$ is then unique if we require the extra condition $f_P(R)=1$. In this case, we find the relation
$$[-1]^*f_P=f_{-P}.$$
We start by replacing the cover $f_P$ by an isomorphic cover which is more adapted for explicit computations: set
$$P'=\frac{n+1}2P\in E[n]$$
and consider the rational function
$$g_P=\tau_{P'}^*f_P\in k(E),$$
where $\tau_{P'}$ denotes the translation by $P'$ in $E$, so that the induced cover
$$g_P:E\to\bold P^1_k$$
is isomorphic to $f_P$. By construction, we have the identity
$$(g_P)=n(P')-n(-P')$$
which leads to the relation
$$g_P=c\frac{f_{P'}}{f_{-P'}}$$
with $c\in k^\times$ and Weil's bilinear relation gives
$$c=g_P(R)=g_P(0_E)=f_P(P')=f_P(P'+R).$$
We now choose an explicit model for E: since we are only concerned with $k$-isomorphism classes of curves, we may suppose that $E$ is given by the affine Weierstrass equation
$$Y^2+XY=X^3+tX,$$
where $t^2=j(E)\in k^\times$ is the $j$-invariant of $E$. Denote by $\mathcal O$ the local ring of $E$ at the point $R=(0,0)$ and let $\frak m$ be its maximal ideal. The rational function $z=XY^{-1}\in k(E)$ is then a uniformizer at $R$ satisfying the identity
$$[-1]^*z=\frac z{z+1}.$$
Consider the truncated Taylor expansion
$$f_{P'}\equiv1+uz+vz^2\pmod{\frak m^3}$$
with $u,v\in k$. We have $u\neq0$, the contrary would imply that $f_{P'}$ is ramified at $R$, which is impossible, since $2R=0_E\neq P'$. The point $R$ being fixed by the canonical involution of $E$, we have the identity
$$[-1]^*f_{P'}\equiv1+u[-1]^*z+v[-1]^*z^2\pmod{\frak m^3},$$
from which we get
$$f_{-P'}\equiv1+uz+(u+v)z^2\pmod{\frak m^3}.$$
We then finally obtain the expansion
$$\aligned
g_P&\equiv c\frac{1+uz+vz^2}{1+uz+(u+v)z^2}\equiv c(1+uz+vz^2)(1+uz+(u^2+u+v)z^2)\equiv\\
&\equiv c(1+uz^2)\pmod{\frak m^3},
\endaligned$$
which implies that the cover $g_P$ is ramified at $R$, with ramification index $2$ or, equivalently, that $f_P$ is wildly ramified at $Q=P'+R$, as desired.

Suppose now that the curve $E$ is supersingular. Since $E(k)$ has no $2$-torsion, the point $Q$ of the proposition is the unique element of $E(k)$ for which $2Q=P$. We just have to determine the ramification index of $f_P$ at this point. We proceed exactly as in the ordinary case: first of all, we can normalize the function $f_P$ by setting $f_P(Q)=1$, so that the associated cover is unramified outside $\infty,0$ and $1$ and we obtain the identity
$$[-1]^*f_P=f_{-P}.$$
We then consider the rational function
$$g_P=\tau_{Q}^*f_P\in k(E),$$
inducing a cover isomorphic to $f_P$. By construction, we have the identity
$$(g_P)=n[Q]-n[-Q]$$
which leads to the relation
$$g_P=c\frac{f_{Q}}{f_{-Q}}$$
with $c\in k^\times$. Moreover, always by construction, the cover $g_P$ is ramified at $0_E$ and $g(0_E)=1$. We then take an affine model of $E$ by considering the affine Weierstrass equation
$$Y^2+Y=X^3.$$
Denote by $\mathcal O$ the local ring of $E$ at $0_E$ and let $\frak m$ be its maximal ideal. The rational function $z=XY^{-1}\in k(E)$ is a uniformizer at $0_E$ satisfying the identity
$$[-1]^*z=\frac z{1+Y^{-1}}.$$
Now, since $Y$ has a pole at $0_E$ of order $3$, we deduce that $Y^{-1}$ belongs to $\frak m^3$, so that we obtain the relation
$$[-1]^*z\equiv z\pmod{\frak m^3}.$$
Consider the truncated Taylor expansion
$$z^nf_{Q}\equiv u+vz+wz^2\pmod{\frak m^3}$$
with $u\in k^\times$ and $v,w\in k$. The point $0_E$ being fixed by the canonical involution of $E$, we have the identity
$$[-1]^*(z^nf_{Q})\equiv u+v[-1]^*z+w[-1]^*z^2\pmod{\frak m^3},$$
from which we easily get the relation
$$g_P\equiv1\pmod{\frak m^3}$$
which implies that the cover $g_P$ is ramified at $0_E$, with ramification index at least $3$, and thus actually equal to $3$.
\end{proof}

\subsection{The main result}

We  now translate Proposition~\ref{ramif} in terms of existence of Lam\'e points:

\begin{theo}\label{main} For any elliptic curve $E$ defined over $k=\bar{\bold F}_2$, the following conditions are equivalent:
\begin{enumerate}
\item There exists a Lam\'e point $P\in E(k)$.
\item Any element of $E(k)-\{0_E\}$ is a Lam\'e point.
\item The curve $E$ is supersingular.
\end{enumerate}
If condition 1) (or 2)) is fulfilled then the signature is always equal to $1$.
\end{theo}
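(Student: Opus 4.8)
The plan is to derive the entire statement directly from Proposition~\ref{ramif}, which already isolates the ramification behaviour at the third branch point $Q$. The guiding observation is that, because $n$ is odd and $k$ has characteristic $2$, the cover $f_P$ is automatically tame above $0_E$ and $P$; hence $f_P$ is a Lam\'e cover (equivalently, $P$ is a Lam\'e point) if and only if it is tamely ramified at $Q$, which in characteristic $2$ occurs precisely when the ramification index there equals $3$ rather than $2$. Proposition~\ref{ramif} computes this index: it is $2$ when $E$ is ordinary and $3$ when $E$ is supersingular. Thus, crucially, whether a point of odd exact order is a Lam\'e point depends only on the ordinary/supersingular dichotomy for $E$, and not on the choice of the point. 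I would then establish the three equivalences through the cycle $(1)\Rightarrow(3)\Rightarrow(2)\Rightarrow(1)$.

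For $(1)\Rightarrow(3)$ I would argue by contraposition: if $E$ is ordinary and $P\in E(k)$ has exact odd order $n>1$, then Proposition~\ref{ramif} gives ramification index $2$ at $Q$, so $f_P$ is wildly ramified and $P$ is not a Lam\'e point; hence the existence of a Lam\'e point forces $E$ to be supersingular. For $(3)\Rightarrow(2)$ I would first recall that every point of $E(\bar{\bold F}_2)$ is torsion and that a supersingular curve has trivial $2$-torsion, so that every $P\in E(k)-\{0_E\}$ has odd exact order; Proposition~\ref{ramif} then yields ramification index $3$ at the corresponding point $Q$, so $f_P$ is tame and $P$ is a Lam\'e point. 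The implication $(2)\Rightarrow(1)$ is immediate, since $E(k)-\{0_E\}$ is non-empty.

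For the signature claim I would use the explicit formula $Q=\frac{n+1}2P$ from the supersingular case of Proposition~\ref{ramif}. Since $n$ is odd, $\frac{n+1}2$ is an integer, and any common divisor of $\frac{n+1}2$ and $n$ divides both $n$ and $n+1$, hence equals $1$; therefore $Q$ has the same order $n$ as $P$, which by definition means the signature equals $1$. The argument presents no genuine obstacle once Proposition~\ref{ramif} is granted; the only points requiring care are the reduction of the notion of Lam\'e point to tameness at $Q$, the remark that supersingularity removes all $2$-torsion (so that every nonzero point has odd order and the proposition applies uniformly), and the elementary order computation for $Q$.
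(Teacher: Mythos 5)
Your proof is correct and follows essentially the same route as the paper: both derive all three equivalences and the signature claim directly from Proposition~\ref{ramif}, the paper's implication 1)~$\Rightarrow$~2) being exactly your composite (1)~$\Rightarrow$~(3)~$\Rightarrow$~(2), so the different ordering of the cycle is purely cosmetic. Your only additions are making explicit the reduction of ``Lam\'e point'' to tameness at $Q$ and the $\gcd\bigl(\frac{n+1}{2},n\bigr)=1$ computation, which the paper compresses into ``$Q$ is a multiple of $P$.''
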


\begin{proof} 1) $\Rightarrow$ 2) If $P$ is a Lam\'e point, Proposition~\ref{ramif} asserts that $E$ is supersingular. In this case, any element $P\in E(k)$ is a torsion point and its order if odd. Again from Proposition~\ref{ramif}, it is a Lam\'e point.

2) $\Rightarrow$ 3) This implication is again a direct consequence of Proposition~\ref{ramif}.

3) $\Rightarrow$ 1) Once again, from Proposition~\ref{ramif}, if $E$ is supersingular then any torsion point is a Lam\'e point.

Finally, Proposition\ref{ramif} implies that the signature is $1$, since the point $Q$ is then a multiple of $P$.
\end{proof}

In terms of the Lam\'e locus $\mathcal L(k)$ defined in the first section, consider the forgetful morphism  $\mathcal M_{1,2}\to\mathcal M_{1,1}$ and denote by $C_0$ the fiber above the point corresponding to the supersingular curve $E$, which is an affine line over $k$, isomorphic to the quotient of $E$ under the action of $\mbox{Isom}_k(E)$ (see Appendix A for further details). In this case, Theorem~\ref{main} can then restated as follows:

\begin{coro} The notation being as above, we have $\mathcal L(k)=C_0(k)$.
\end{coro}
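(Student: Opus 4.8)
The plan is to deduce this directly from Theorem~\ref{main}, the statement being essentially a reformulation in moduli-theoretic language. First I would unwind the definition of the fiber $C_0$. By construction it is the preimage under $\mathcal M_{1,2}\to\mathcal M_{1,1}$ of the single point of $\mathcal M_{1,1}(k)$ attached to the supersingular $j$-invariant $j=0$; since characteristic $2$ admits a unique supersingular curve up to $k$-isomorphism, namely $E$, the $k$-points of $C_0$ are precisely the isomorphism classes of $2$-pointed curves $(E',P)$ with $E'$ supersingular and $P\neq0_{E'}$. Using the explicit description of Appendix~A, these are exactly the $\mbox{Isom}_k(E)$-orbits of points $P\in E(k)-\{0_E\}$, which is why $C_0$ is realized as an affine line.

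Next I would compare this with the Lam\'e locus. By definition $\mathcal L(k)$ is the set of isomorphism classes of Lam\'e curves, i.e. of pairs $(E',P)$ for which $P$ is a Lam\'e point. The equivalences of Theorem~\ref{main} translate this membership condition into a condition on $E'$ alone: the implication $1)\Rightarrow3)$ shows that any class $(E',P)\in\mathcal L(k)$ has $E'$ supersingular and hence lies in $C_0(k)$, giving $\mathcal L(k)\subseteq C_0(k)$; conversely the implication $3)\Rightarrow2)$ shows that every nonzero point of a supersingular curve is a Lam\'e point, giving $C_0(k)\subseteq\mathcal L(k)$. Combining the two inclusions yields the desired equality $\mathcal L(k)=C_0(k)$.

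There is no genuine obstacle here, since all the analytic content has already been absorbed into Proposition~\ref{ramif} and hence into Theorem~\ref{main}. The only point demanding a little care is the identification of $C_0(k)$ with the set of isomorphism classes of pairs over $E$, which rests on two facts already available: the uniqueness of the supersingular curve in characteristic $2$ and the realization of the coarse space $\mathcal M_{1,2}$ from Appendix~A. I would also remark, for completeness, that for supersingular $E$ every point of $E(k)-\{0_E\}$ has odd exact order $n>1$ (as $E(k)$ has no $2$-torsion), so that the cover $f_P$ is defined for each such point and the hypotheses of Theorem~\ref{main} apply to every class of $C_0(k)$ without exception.
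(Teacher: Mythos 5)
Your proof is correct and follows the same route as the paper, which presents this corollary as a direct restatement of Theorem~\ref{main}: you obtain both inclusions from the implications $1)\Rightarrow3)$ and $3)\Rightarrow2)$, together with the identification of $C_0(k)$ with $\mbox{Isom}_k(E)$-orbits from Appendix~A. Your closing remark that every point of $E(k)-\{0_E\}$ has odd exact order (absence of $2$-torsion on a supersingular curve) is a worthwhile precision that the paper leaves implicit.
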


\noindent{\bf\emph{Remark.}} It is actually possible to prove that in any positive characteristic $p$, the Zariski closure in $\mathcal M_{1,2}$ of the Lam\'e locus $\mathcal L(\bar{\bold F}_p)$ is an algebraic curve defined over $\bold F_p$. The particularity of the case $p=2$ is that this curve is 'vertical', i.e. it coincides with a fiber of the canonical projection $\mathcal M_{1,2}\to\mathcal M_{1,1}$.

\subsection{Enumeration and fields of moduli of Lam\'e curves in characteristic $2$} Let $E$ be the supersingular elliptic curve defined over $k=\bar{\bold F}_2$, given by the affine Weierstrass equation
$$Y^2+Y=X^3,$$
and consider the cover
$$\aligned
E&\stackrel\rho\longrightarrow\bold P^1_k\\
(X,Y)&\mapsto(X^4+X)^3,
\endaligned$$
which is just an explicit model of the projection $E\to E/\mbox{Isom}_k(E)$. The restriction of this cover to the open subset $E-\{0_E\}$ leads to a finite cover of the affine line. In particular, for any point $P\in E(k)-\{0_E\}$, we can consider $\rho(P)$ as an element of $k$.

\begin{theo} The morphism $\rho$ induces a bijection between the set of isomorphism classes of Lam\'e curves defined over $k$ and the set $\bold A^1(k)$ which commutes with the Galois action.\end{theo}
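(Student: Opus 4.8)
The plan is to show that $\rho$ realizes the quotient $E/\mathrm{Isom}_k(E)$ restricted to $E-\{0_E\}$, and that this quotient is precisely the identification coming from $k$-isomorphism of Lamé curves. By Theorem~\ref{main}, every point $P\in E(k)-\{0_E\}$ is a Lamé point, so the map $P\mapsto(E,P)$ lands in the Lamé locus and is surjective onto the set of isomorphism classes of Lamé curves, since all such curves have the supersingular $E$ as underlying elliptic curve (this is exactly the content of Theorem~\ref{main} combined with the Corollary identifying $\mathcal{L}(k)=C_0(k)$). Thus the only real work is to identify when two points $P,P'\in E(k)-\{0_E\}$ give $k$-isomorphic Lamé curves, and to check that this coincides with the fibers of $\rho$.

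First I would recall that two Lamé curves $(E,P)$ and $(E,P')$ are isomorphic as $2$-pointed curves exactly when there is an automorphism $\phi\in\mathrm{Isom}_k(E)$ fixing $0_E$ with $\phi(P)=P'$; since the underlying curve is the unique supersingular $E$, every isomorphism of the pairs is induced by an element of $\mathrm{Aut}_k(E)=\mathrm{Isom}_k(E)$, the automorphism group of order $24$. Hence the set of isomorphism classes of Lamé curves is naturally identified with the orbit space $\bigl(E(k)-\{0_E\}\bigr)/\mathrm{Isom}_k(E)$. It then remains to verify that the morphism $\rho:(X,Y)\mapsto(X^4+X)^3$ has exactly these orbits as its fibers, i.e. that $\rho(P)=\rho(P')$ if and only if $P'=\phi(P)$ for some $\phi\in\mathrm{Isom}_k(E)$. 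This is a direct computation with the explicit automorphism group of the curve $Y^2+Y=X^3$: one checks that the order-$24$ group is generated by translations-by-$2$-torsion-type maps and the extra automorphisms coming from $j=0$, and that the invariant function $(X^4+X)^3$ generates the field of invariants $k(E)^{\mathrm{Isom}_k(E)}$. Because $\rho$ has degree $24$ onto $\bold A^1$ and each generic orbit has size $24$, matching degrees forces the fibers to be exactly the orbits.

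The main obstacle will be the explicit verification that $\rho$ is the quotient map, i.e. that $(X^4+X)^3$ is a primitive generator of the invariant field and that $\rho$ has degree exactly $24$ with no smaller stabilizer-induced coincidences off a measure-zero set. Concretely I would exhibit the automorphisms of $E:Y^2+Y=X^3$ over $k=\bar{\bold F}_2$ explicitly (the substitution $X\mapsto \zeta X$ for $\zeta^3=1$ accounts for a factor, and the remaining automorphisms of the formal/full group of order $24$ act by the quaternion-type symmetries), confirm each fixes the cusp $0_E$, and compute how $X^4+X$ and its cube transform under them to see invariance. Surjectivity of the induced map onto $\bold A^1(k)$ is immediate since $\rho$ is a dominant morphism of affine curves over an algebraically closed field and every fiber is nonempty.

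Finally I would address Galois-equivariance. Since $E$, the point $0_E$, and the formula $(X,Y)\mapsto(X^4+X)^3$ are all defined over the prime field $\bold F_2$, the morphism $\rho$ commutes with the action of $\mathrm{Gal}(\bar{\bold F}_2/\bold F_2)$ on $k$-points; and since an element of this Galois group carries a Lamé curve $(E,P)$ to $(E^\sigma,P^\sigma)=(E,P^\sigma)$ (again using that $E$ is defined over $\bold F_2$), the bijection intertwines the two Galois actions by construction. I expect this last point to be routine once the defining data are observed to be $\bold F_2$-rational; the genuine content of the theorem is the orbit-equals-fiber identification established above.
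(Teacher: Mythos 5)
Your proposal is correct and takes essentially the same route as the paper, whose entire proof is the single sentence ``It is a straightforward consequence of Theorem~\ref{main}'': the paper asserts beforehand (via Appendix A) that $\rho$ is an explicit model of the projection $E\to E/\mbox{Isom}_k(E)$, and your extra work verifying that $(X^4+X)^3$ is invariant of degree $24$ and hence generates the invariant field merely fills in what the paper leaves implicit. One small slip, which does not affect the argument: the order-$24$ group contains no ``translations-by-$2$-torsion-type maps'' (the supersingular curve has no nontrivial $2$-torsion in characteristic $2$); the automorphisms are $(X,Y)\mapsto(u^2X+s^2,\,Y+u^2sX+t)$ with $u^3=1$, $s\in\bold F_4$, $t^2+t=s^6$, all of which fix $0_E$, and under these $X^4+X\mapsto u^2(X^4+X)$, so its cube is indeed invariant.
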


\begin{proof} It is a straightforward consequence of Theorem~\ref{main}.
\end{proof}

Consider the multiplicative function $\psi:\bold N\to\bold N$ defined by
$$\psi(p^r)=p^{2r-2}(p^2-1)$$
for any odd prime number $p$ and any positive integer $r$. Remark that for $n$ odd, $\psi(n)$ is just the number of elemets of $E(k)$ of exact order $n$. The next corollaries direct follow from the above result.

\begin{coro}\label{enu2} For any odd integer $n>1$ not divisible by $3$, there exist $\frac{n^2-1}{24}$ (resp. $\frac{\psi(n)}{24}$) isomorphism classes of Lam\'e curves of order dividing $n$ (resp. of exact order $n$) over $k$.
\end{coro}

\noindent{\bf\emph{Remark.}} For simplicity, the above result just describes the case where $n$ is prime to $3$. If $n=3m$, we obtain $\frac{3m^2+5}8$ isomorphism classes of Lam\'e curves of order dividing $n$.

Let now $\eta:\bold N\to\bold N$ be the multiplicative function defined by
$$\eta(p^r)=2^{p^r}-2^{p^{r-1}}$$
for any prime number $p$ and any positive integer $r$.

\begin{coro} For any positive integer $d$, there exist exactly $q=2^d$ (resp. $\eta(d)$) isomorphism classes of Lam\'e curves over $k$ whose field of moduli is contained in (resp. coincides with) $\bold F_q$.
\end{coro}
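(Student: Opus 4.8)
The plan is to transport both counts, via the bijection of the preceding Theorem, into a purely field-theoretic statement about the points of $\bold A^1(k)$. Since that bijection commutes with the action of $\gal(\bar{\bold F}_2/\bold F_2)$ and the morphism $\rho$ is defined over $\bold F_2$, the field of moduli of the Lam\'e curve attached to a point $a=\rho(P)\in\bold A^1(k)$ is exactly the field of definition $\bold F_2(a)$ of $a$, regarded as an element of $\bar{\bold F}_2$. Thus enumerating Lam\'e curves by their field of moduli becomes the same problem as enumerating the elements $a\in\bar{\bold F}_2$ according to the subfield $\bold F_2(a)$ they generate.

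First I would dispose of the inclusion statement. An element $a$ satisfies $\bold F_2(a)\subseteq\bold F_q$ precisely when $a\in\bold F_q$, and the points of $\bold A^1(k)$ lying in $\bold F_q$ are exactly the $q$ elements of $\bold F_q$. Hence there are $q=2^d$ isomorphism classes of Lam\'e curves whose field of moduli is contained in $\bold F_q$, which settles the first assertion with no further work.

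For the second assertion I would argue by M\"obius inversion over the lattice of subfields of $\bold F_q$. Write $A(d)$ for the number of Lam\'e curves with field of moduli contained in $\bold F_{2^d}$ and $B(d)$ for the number with field of moduli equal to $\bold F_{2^d}$. Since the subfields of $\bold F_{2^d}$ are exactly the $\bold F_{2^e}$ with $e\mid d$, the first count decomposes as $A(d)=\sum_{e\mid d}B(e)$, that is $2^d=\sum_{e\mid d}B(e)$. M\"obius inversion then gives
$$B(d)=\sum_{e\mid d}\mu(d/e)\,2^e,$$
which is nothing but the number of elements of $\bar{\bold F}_2$ of exact degree $d$ over $\bold F_2$. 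It remains to identify this closed form with $\eta(d)$.

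The main obstacle is precisely this last identification. For a prime power $d=p^r$ the only nonvanishing M\"obius terms come from $e=p^r$ and $e=p^{r-1}$, so the sum collapses cleanly to $2^{p^r}-2^{p^{r-1}}=\eta(p^r)$, matching the defining formula. The delicate point is the passage to general $d$: one must control how the exact-degree condition decouples across the distinct prime-power components of $d$ under the Galois action of $\gal(\bar{\bold F}_2/\bold F_2)$, keeping track of the fact that the automorphisms of the supersingular curve are only defined over $\bold F_4$, so that the interplay between the $\bold F_2$- and $\bold F_4$-structures governs the refined bookkeeping. Once this factorization of $B(d)$ into its prime-power contributions is established, the identity $B(d)=\eta(d)$ follows, and together with the count $2^d$ of the second paragraph this completes the proof.
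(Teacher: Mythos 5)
Your overall route is the same as the paper's: the corollary is meant to follow directly from the preceding Theorem, by transporting the count through the Galois-equivariant bijection $\rho$ with $\bold A^1(k)$, so that the field of moduli of a Lam\'e curve becomes the field of definition $\bold F_2(a)$ of the corresponding point $a\in\bar{\bold F}_2$. Your first two steps are correct and complete: the classes with field of moduli contained in $\bold F_q$ correspond to the $q=2^d$ points of $\bold A^1(\bold F_q)$, and M\"obius inversion of $2^d=\sum_{e\mid d}B(e)$ gives $B(d)=\sum_{e\mid d}\mu(d/e)\,2^e$ for the exact-degree count.

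The gap is in your last paragraph, and it cannot be filled: the identity $B(d)=\eta(d)$ that you propose to establish is false whenever $d$ has at least two distinct prime factors, because $d\mapsto\sum_{e\mid d}\mu(d/e)\,2^e$ is not multiplicative (the function $d\mapsto 2^d$ is not, so neither is its M\"obius transform). Concretely, for $d=6$ one has $B(6)=2^6-2^3-2^2+2=54$, whereas $\eta(6)=\eta(2)\eta(3)=(2^2-2)(2^3-2)=12$. No bookkeeping about how the exact-degree condition decouples over the prime-power components of $d$ can repair this, and the geometric considerations you invoke (the automorphisms of $E$ being defined only over $\bold F_4$) are irrelevant at this stage: the whole content of the Theorem is that the bijection already commutes with the Galois action, so after transport the problem is pure finite-field arithmetic with no further input from the curve. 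What your (correct) computation actually proves is that the number of classes whose field of moduli is exactly $\bold F_{2^d}$ equals $\sum_{e\mid d}\mu(d/e)\,2^e$, which coincides with the paper's $\eta(d)$ only when $d$ is a prime power; in other words, your M\"obius formula is the right count, and the discrepancy you ran into reflects an error in the paper's own definition of $\eta$ as a multiplicative function rather than a deficiency of your first three paragraphs. As a proof of the statement as literally written, however, the final identification is a genuine gap: it asserts an arithmetic identity that is false.
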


\subsection{Lifting from and reducing to characteristic $2$} From now on, we fix an injection $\qbar\hookrightarrow\bar{\bold Q}_2$, so that we can consider semi-stable models and define the notion of (potentially) good reduction of Lam\'e curves. Here, by good reduction, we just mean that the elliptic curve $E$ has potentially good reduction, which is equivalent, following Deuring's Theorem, to the fact that the $j$-invariant of $E$ has non-negative $2$-valuation.

\begin{theo} A Lam\'e curve of odd degree over $\bar{\bold Q}_2$ has potentially good reduction if and only if it has signature $1$. In particular, there is a bijection between the set of Lam\'e curves of odd degree and signature $1$ over $\bar{\bold Q}_2$ and the set of Lam\'e curves over $\bar{\bold F}_2$.
\end{theo}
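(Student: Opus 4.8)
The plan is to reduce the statement to a comparison between tame covers in characteristic $0$ and characteristic $2$ and then to exploit the classifications already available on both sides. A Lam\'e curve $(E,P)$ of odd order $n$ over $\bar{\bold Q}_2$ is the same datum as its Lam\'e cover $f_P\colon E\to\bold P^1$, a tamely ramified cover of $(\bold P^1,\{0,1,\infty\})$ with branch datum $(n:n:3,1,\dots,1)$. Since the three branch points are the sections $0,1,\infty$, which stay distinct and smooth modulo $2$, and since the ramification indices $n$ and $3$ are prime to $2$, the cover is tame at $2$ and one may speak of its reduction. First I would introduce the reduction map sending a Lam\'e curve over $\bar{\bold Q}_2$ of order $n$ with good reduction to a Lam\'e curve over $\bar{\bold F}_2$ of the same order: if $E$ has good reduction then $\bar E$ is an elliptic curve over $\bar{\bold F}_2$, the cover $f_P$ specializes to the cover $f_{\bar P}$ attached to $\bar P$ (by uniqueness of the function with divisor $n(\bar P)-n(0_{\bar E})$), and this reduced cover is again tame with the same branch datum. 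By Proposition~\ref{ramif} this forces $\bar E$ to be supersingular and $(\bar E,\bar P)$ to be a genuine Lam\'e curve, so the map is well defined.

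The second step is to show that this reduction map is a bijection. Injectivity follows from the rigidity half of Grothendieck's specialization theorem for the tame fundamental group of $\bold P^1_{\bar{\bold F}_2}-\{0,1,\infty\}$: a prime-to-$2$ tame cover with good reduction is determined by its special fibre. Surjectivity follows from the lifting half of the same theorem: every tame cover over $\bar{\bold F}_2$ with branch datum $(n:n:3,1,\dots,1)$ lifts to a tame cover over $\bar{\bold Q}_2$ with the same datum, and this lift has good reduction by construction, since its special fibre is the given smooth supersingular cover. Consequently the number of Lam\'e curves over $\bar{\bold Q}_2$ of order $n$ with good reduction equals the characteristic-$2$ count of Corollary~\ref{enu2}, namely $\psi(n)/24$ for $n$ prime to $3$ (and the analogous value in general).

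It then remains to identify the good-reduction locus with the signature-$1$ locus. Half of this is packaged in the counting: by Theorem~\ref{char0} the signature-$1$ Lam\'e curves of order $n$ are counted by primitive all-odd triples of degree $n$, and a direct combinatorial check identifies this number with $\psi(n)/24$, i.e. with the count just obtained; thus the good-reduction locus and the signature-$1$ locus have the same cardinality. To conclude that they coincide it suffices to prove the single inclusion that good reduction forces signature $1$. Here the cleanest conceptual route is through Theorem~\ref{diff}: a signature-$0$ Lam\'e curve corresponds to a Lam\'e operator whose full monodromy is dihedral of order $4n$ rather than $2n$, the extra factor $2$ reflecting the nontrivial $2$-torsion point $T\in E[2]$ hidden in the relation $Q=\frac{n+1}2P+T$. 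I would show that this $T$ is a vanishing cycle for the stable reduction of $f_P$ at $2$: when $T\neq 0_E$ the stable model of the cover acquires a node, $E$ degenerates to a curve of multiplicative type and $v_2(j(E))<0$, whereas when $T=0_E$ the stable model is smooth and $E$ has potentially good reduction. Combined with the equality of cardinalities this yields the coincidence of the two loci, and the reduction map of the previous step restricts to the announced bijection between signature-$1$ Lam\'e curves over $\bar{\bold Q}_2$ and Lam\'e curves over $\bar{\bold F}_2$.

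The main obstacle I expect is precisely this last point: controlling the stable reduction of the genus-$1$ cover $f_P$ at the prime $2$ and proving that the appearance of a node is equivalent to the presence of the $2$-torsion point $T$, that is, to signature $0$. Unlike the base $\bold P^1$, whose three marked points cannot collide, the positive-genus source is free to degenerate, and pinning down exactly when it does requires the theory of stable (admissible) reduction of covers \`a la Raynaud, together with the explicit $2$-adic computation underlying Proposition~\ref{ramif}. The combinatorial identity between the number of primitive all-odd triples of degree $n$ and $\psi(n)/24$ then serves as an independent consistency check that the node appears for exactly the signature-$0$ curves.
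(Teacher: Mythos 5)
Your proposal reproduces one half of the paper's argument faithfully: the equality of counts (signature-$1$ triples from Theorem~\ref{char0} versus the characteristic-$2$ count of Corollary~\ref{enu2}), combined with the unique lifting of tamely ramified covers from $\bar{\bold F}_2$ to $\bar{\bold Q}_2$, is exactly how the paper upgrades the single inclusion ``good reduction $\Rightarrow$ signature $1$'' to the full equivalence and the bijection. But that single inclusion is the crux, and your proposal does not prove it: you reduce everything to it and then offer only a hoped-for picture (the $2$-torsion point $T$ as a ``vanishing cycle'' for a stable model \`a la Raynaud), which you yourself flag as the main unresolved obstacle. As written, this is a genuine gap, not a proof. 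There is also a circularity earlier in your construction of the reduction map: you assert that the reduced cover $f_{\bar P}$ ``is again tame with the same branch datum'' and then invoke Proposition~\ref{ramif} to conclude that $\bar E$ is supersingular. But preservation of tameness and of the branch datum under specialization is precisely what fails when $\bar E$ is ordinary --- in that case Proposition~\ref{ramif} produces a wildly ramified point of index $2$ --- so the claim you start from is essentially equivalent to the statement you are trying to establish.

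The paper closes this gap with a short degeneration argument that avoids stable-reduction machinery. Take the semi-stable model $\mathcal E\to\mathcal X$ of the cover and suppose $E$ has good reduction, so that $\bar E$ occurs as a component of $\bar{\mathcal E}$ and the restriction of the reduced cover to it is the canonical cover $f_{\bar P}$. Consider the four-element set $S=\{R\in E \,|\, 2R=P\}$: by the results recalled from~\cite{Zapponi3} (or by Weil reciprocity), signature $0$ means that exactly three points of $S$ lie in the fiber above $1$, one of them being the index-$3$ point. Since $S$ is a coset of $E[2]$ and $E[2]$ reduces to at most two points in characteristic $2$, these three points must collide in the special fiber, forcing a ramified point above $1$ of index at least $4$ (or a second ramified point above $1$), which Proposition~\ref{ramif} forbids: the reduced cover can only have a single extra ramified point, of index $2$ or $3$. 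Hence signature $0$ forces bad reduction. (The paper also notes an alternative via the enumeration of bad-reduction Lam\'e curves in~\cite{Zapponi5}.) This collision-of-$2$-torsion argument is the concrete content that your ``vanishing cycle'' paragraph gestures at; supplying it would both complete your key inclusion and repair the well-definedness of your reduction map, since supersingular reduction then yields the tame branch datum you asserted.
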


\begin{proof} Let $(E,P)$ be a Lam\'e curve of odd degree over $\bar{\bold Q}_2$ and denote by $f:E\to\bold P^1$ the corresponding Lam\'e cover (unramified above $\infty,0$ and $1$. The set
$$S=\{R\in E(k)\,\,|\,\, 2R=P\}$$
has order $4$ and only one of its elements has order $n$. By construction, the ramified point of index $3$ belongs to $S$ and lies above $1$ under $f$. Following the results of \S2.4 in~\cite{Zapponi3} (where the notation is slightly different, the origin of $E$ being the ramified point of order $3$) an element of $S$ lies above $1$ or $-1$ and the signature is equal to $0$ if and only if exactly $3$ points of $S$ lie above $1$ (this result may also be derived from Weil's reciprocity law). Consider the semi-stable model $\mathcal E\to\mathcal X$ of $f$ and suppose that $E$ has good reduction. In general, the reduced curve $\bar{\mathcal E}$ may not have good reduction, but the reduced curve $\bar E$ appears as one of its irreducible components. Moreover, it follows from the particularity of the covers studied here that the restriction of the morphism  $\bar{\mathcal E}\to\bar{\mathcal X}$ do $\bar E$ coincides with the canonical cover $f_{\bar P}$ associated to $(\bar E,\bar P)$, as defined in \S2.1. Now, if the signature is $0$, then the ramification index of the ramified point above $1$ would be at least $4$ (since the reduction of $S$ has cardinality at most $2$), which is impossible from Proposition~\ref{ramif}. It then follows that $E$ has bad reduction.

Alternatively, Theorem 5.1 in~\cite{Zapponi5} (where, once again, the notation is slightly different) gives a full enumeration of the isomorphism classes of Lam\'e curves of given order with bad reduction at a given prime. For $p=2$, in the case of odd degree, we recover exactly the set of Lam\'e curves of signature $0$.

Finally, following Theorem~\ref{char0} and Corollary~\ref{enu2}, it is easily checked that the number of isomorphism classes of Lam\'e curves over $\bar{\bold Q}_2$ of odd degree and signature $1$ coincides with the number of isomorphism classes of Lam\'e curves over $\bar{\bold F}_2$. The result follows from the fact that an isomorphism class of tamely ramified covers over $\bar{\bold F}_2$ uniquely lifts to an isomorphism class of covers over $\bar{\bold Q}_2$ having the same ramification datum (see for example~\cite{sga1}).
\end{proof}

\noindent{\bf\emph{Remark.}} It follows moreover that a Lam\'e curve of odd degree has good reduction if and only if the same holds for the associated Lam\'e cover (which is generally a stronger condition).

\ 

Let $\bold Q_2^{\tiny ur}\subset\bar{\bold Q}_2$ be the maximal unramified extension and, for any positive integer $d$, let $K_d$ the unique unramified degree $d$ extension of $\bold Q_2$. Since the lifting from characteristic $2$ commutes with the Galois action, we obtain the following result:

\begin{coro} Any Lam\'e curve of odd degree and signature $1$ can be defined over $\bold Q_2^{\tiny ur}$. More precisely, there exist exactly $2^d$ (resp. $\eta(d)$) isomorphism classes of such curves whose field of moduli is contained in (resp. coincides with) $K_d$.
\end{coro}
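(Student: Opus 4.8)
The plan is to combine the preceding Theorem on potentially good reduction with the enumeration results of the characteristic $2$ theory, translating everything into statements about fields of moduli over the local field $\bold Q_2$. The key observation is that the bijection of the previous Theorem, between isomorphism classes of Lam\'e curves of odd degree and signature $1$ over $\bar{\bold Q}_2$ and isomorphism classes of Lam\'e curves over $\bar{\bold F}_2=k$, is \emph{Galois-equivariant}: since such a curve has potentially good reduction and the associated tame cover lifts uniquely, the reduction map identifies the $\gal(\bar{\bold Q}_2/\bold Q_2)$-action upstairs with the $\gal(k/\bold F_2)$-action downstairs, the latter being topologically generated by the Frobenius $\phi:x\mapsto x^2$. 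This is precisely the assertion already recorded in the excerpt that ``the lifting from characteristic $2$ commutes with the Galois action.''

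First I would make explicit the dictionary between fields of moduli. The field of moduli of a Lam\'e curve over $\bold Q_2$ (as a $2$-pointed curve) is the fixed field of the stabilizer of its isomorphism class in $\gal(\bar{\bold Q}_2/\bold Q_2)$; under the Galois-equivariant bijection this matches the fixed field, inside $k$, of the stabilizer of the corresponding class in $\gal(k/\bold F_2)$, which is the field of moduli computed in the characteristic $2$ setting. Because $\gal(\bold Q_2^{\tiny ur}/\bold Q_2)\cong\gal(k/\bold F_2)\cong\hat{\bold Z}$ with $K_d$ corresponding to $\bold F_{2^d}=\bold F_q$ under $d\mapsto\bold F_{2^d}$, a class is defined over $\bold Q_2^{\tiny ur}$ exactly when its characteristic $2$ field of moduli is finite (which it always is, the curves being defined over $k_0=k$), and its $\bold Q_2$-field of moduli lies in $K_d$ (resp. equals $K_d$) if and only if its characteristic $2$ field of moduli is contained in (resp. coincides with) $\bold F_q$.

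Second, I would transport the counts. The two preceding Corollaries in \S2.3 already assert that there are exactly $q=2^d$ (resp. $\eta(d)$) isomorphism classes of Lam\'e curves over $k$ with field of moduli contained in (resp. equal to) $\bold F_q$. Feeding these through the Galois-equivariant bijection and the field-of-moduli dictionary of the previous step yields immediately that there are $2^d$ (resp. $\eta(d)$) isomorphism classes of Lam\'e curves of odd degree and signature $1$ over $\bar{\bold Q}_2$ whose $\bold Q_2$-field of moduli is contained in (resp. coincides with) $K_d$; in particular every such class is defined over $\bigcup_d K_d=\bold Q_2^{\tiny ur}$. Here I use that a Lam\'e cover, being tamely ramified above three points, is defined over its field of moduli by Weil's rigidity criterion, so ``field of moduli contained in $K_d$'' genuinely gives a model over $K_d$.

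The main obstacle is establishing the Galois-equivariance cleanly, since one must verify that the reduction functor intertwines the two Galois actions at the level of isomorphism classes of pointed curves rather than merely matching cardinalities. The subtle point is that the unique tame lift and the reduction are inverse, $\gal$-compatible operations: an element $\sigma\in\gal(\bar{\bold Q}_2/\bold Q_2)$ fixing $\bold Q_2^{\tiny ur}$ acts trivially on reductions, so the induced action on the set of lifts factors through $\gal(\bold Q_2^{\tiny ur}/\bold Q_2)$, and one checks this quotient acts through reduction-compatible Frobenius exactly as $\gal(k/\bold F_2)$ does on the characteristic $2$ classes. Once this compatibility is in place the corollary is formal, so I would devote the bulk of the argument to pinning it down and relegate the numerics to a reference to the two Corollaries of \S2.3.
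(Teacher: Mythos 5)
Your overall route is the same as the paper's: the uniqueness of the tame lift forces inertia to act trivially on isomorphism classes, so the field of moduli is unramified, and the counts are then transported through the Galois-equivariant lifting bijection from the two Corollaries of \S2.3. Up to that point your argument matches the paper's.

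The genuine gap is in your final descent step. You assert that a Lam\'e cover, ``being tamely ramified above three points, is defined over its field of moduli by Weil's rigidity criterion.'' That is not what the rigidity criterion gives, and the principle you invoke is false in general: rigidity only guarantees that a cover unramified outside three points can be defined over \emph{some} finite extension of the prime field, i.e. over $\qbar$ (this is exactly how it is used in \S1.1), not over its field of moduli. Whether the field of moduli is a field of definition is governed by Weil's \emph{descent} criterion, and the obstruction is cohomological, valued in the automorphism group of the object; there are well-known examples of covers of $\bold P^1$ unramified outside three points (dessins d'enfants) that are not defined over their field of moduli. The paper closes this point differently: for $n>3$ the pair $(E,P)$ has trivial automorphism group, so Weil descent applies with no obstruction and the field of moduli is the minimal field of definition, while the case $n=3$ (where non-trivial automorphisms fixing $P$ can occur, e.g. on curves with $j=0$) is treated explicitly. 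Without such an argument your proof only shows that the \emph{field of moduli} lies in $\bold Q_2^{\tiny ur}$ (resp. in $K_d$), not that the curve admits a model there, which is precisely the first assertion of the corollary.
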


\begin{proof} This is a classical argument: first of all, the unicity of the lift implies that the inertia acts trivially on the isomorphism classes of Lam\'e curves of odd degree and signature $0$. In other worlds, the field of moduli of a Lam\'e curve of odd degree and signature $1$ is unramified. The result then follows from the fact that in this special situation, the field of moduli is the minimal field of definition (this is clear for $n>3$, since in this case Lam\'e curves have trivial automorphism group and the case $n=3$ can be explicitly treated).
\end{proof}

\noindent{\bf\emph{Remark.}} In the case of signature $0$, since the curve has bad reduction, the local field of moduli is completely determined in~\cite{Zapponi5}.

\ 

Finally, these some local informations directly lead to the following straightforward result in the global setting:

\begin{coro} Any Lam\'e curve of odd degree and signature $1$ over $\bar{\bold Q}$ can be defined over a number field unramified above $2$ and has good reduction at any prime lying above $2$. Furthermore, the number of Lam\'e curves of odd degree and signature $1$ defined over a fixed number field $K$ can be bounded by a constant only depending on the (maximal) degree of the residual extensions of $K$ at $2$.
\end{coro}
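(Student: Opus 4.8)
The plan is to deduce this global statement from the local results established just above, via the standard local--global compatibility of the field of moduli together with Weil's rigidity criterion. Throughout I write $M$ for the field of moduli of a given Lam\'e curve $(E,P)$, namely the fixed field of the stabilizer of the isomorphism class $[(E,P)]$ in $\gal(\qbar/\bold Q)$, and I use the fixed injection $\qbar\hookrightarrow\qpbar$ to localize at $2$.

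For the qualitative part, let $(E,P)$ be a Lam\'e curve of odd degree $n$ and signature $1$ over $\qbar$. By Weil's rigidity criterion (cf.~\cite{Weil}) the field of moduli $M$ is a number field. I would show that $M$ is unramified above $2$ by checking that the inertia group acts trivially on $[(E,P)]$. Under the chosen injection, restriction to $\qbar$ identifies $\gal(\qpbar/\bold Q_2)$ with the decomposition subgroup of $\gal(\qbar/\bold Q)$ attached to the induced prime, the inertia of the former mapping onto that of the latter. Since $[(E,P)]$ is represented by a point of $\mathcal M_{1,2}$ with coordinates in $\qbar$, an element $\tau\in\gal(\qpbar/\bold Q_2)$ acts on the base-changed class exactly as $\tau|_{\qbar}$ acts on the global class; hence $\tau$ fixes the local class if and only if $\tau|_{\qbar}$ fixes the global one. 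The preceding corollary shows (through the unicity of the lift) that inertia acts trivially on the isomorphism classes of odd-degree signature-$1$ Lam\'e curves over $\qpbar$; by the compatibility just described, the inertia subgroup of $\gal(\qbar/\bold Q)$ therefore fixes $[(E,P)]$ and so lies in $\gal(\qbar/M)$. As this holds at every prime above $2$, the extension $M/\bold Q$ is unramified above $2$. For $n>3$ a Lam\'e curve has trivial automorphism group (the case $n=3$ being treated directly, as in the preceding corollary), so $M$ is a field of definition and $(E,P)$ is defined over the number field $M$. Good reduction at each prime above $2$, in the sense $v_{\frak p}(j)\geq0$, then follows from the reduction theorem above, since the base change to $\qpbar$ again has signature $1$.

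For the quantitative part, fix a number field $K$ and a prime $\frak p$ of $K$ above $2$ whose residual degree $d=f_{\frak p}$ is maximal. A Lam\'e curve of odd degree and signature $1$ defined over $K$ has field of moduli contained in $K$, and by the above this field is unramified above $2$; its completion at the place induced by $\frak p$ hence lies in the maximal unramified subextension $K_d$ of $K_{\frak p}$. Base changing along $\qbar\hookrightarrow\qpbar$ sends each $\qbar$-isomorphism class to a $\qpbar$-isomorphism class of an odd-degree signature-$1$ Lam\'e curve whose local field of moduli is contained in $K_d$. This assignment is injective, because a Lam\'e curve is determined by its point of $\mathcal M_{1,2}$ and distinct $\qbar$-points of this moduli space remain distinct in $\mathcal M_{1,2}(\qpbar)$ under an injection of fields. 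By the preceding corollary there are exactly $2^d$ classes of signature-$1$ Lam\'e curves over $\qpbar$ with field of moduli contained in $K_d$, so the number of $K$-classes is bounded by $2^d$, a constant depending only on the maximal residual degree of $K$ at $2$.

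The main obstacle I anticipate is making the local--global comparison of fields of moduli fully rigorous: one must verify that the inertia and decomposition subgroups of $\gal(\qbar/\bold Q)$ attached to the chosen prime act on $[(E,P)]$ precisely as the restrictions of the corresponding subgroups of $\gal(\qpbar/\bold Q_2)$ act on the base-changed class, so that the local triviality of the inertia action established in the preceding corollary transfers verbatim to the global setting. A secondary point requiring care is the identification of \emph{defined over $K$} with \emph{field of moduli contained in $K$}, which relies on the field of moduli being a field of definition; this is immediate for $n>3$ by the vanishing of automorphisms but must be handled separately in the remaining case $n=3$.
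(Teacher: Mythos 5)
The paper states this corollary without proof, presenting it as a direct consequence of the two local results of \S 2.4, and your overall strategy --- transporting those local statements to the global setting through the compatibility of the field of moduli with the embedding $\qbar\hookrightarrow\bar{\bold Q}_2$ --- is exactly the intended route. Your qualitative half is essentially correct: inertia at the induced prime acts trivially by the preceding corollary, the field of moduli is a field of definition because the automorphism group is trivial for $n>3$ (with $n=3$ handled separately), and good reduction follows from the reduction theorem. The one gloss is the passage from the single prime induced by the fixed embedding to \emph{all} primes above $2$: this needs the remark that the set of classes of odd-degree, signature-$1$ Lam\'e curves is stable under $\gal(\qbar/\bold Q)$, so that every conjugate inertia subgroup also acts trivially; as written, ``as this holds at every prime above $2$'' is asserted without this conjugation step.

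The quantitative half, however, contains a genuine gap. You fix a prime $\frak p$ of $K$ of maximal residual degree $d$ and correctly observe that the completion of the field of moduli $M$ at the place of $M$ lying \emph{under $\frak p$} is contained in $K_d$. But the local field of moduli of the base-changed curve is the completion of $M$ at the place induced by the \emph{fixed} embedding $\qbar\hookrightarrow\bar{\bold Q}_2$, and this place need not lie under $\frak p$: it lies under some prime $\frak p_0$ of $K$ above $2$, so the local field of moduli is $K_e$ with $e\mid f_{\frak p_0}$. Since unramified extensions of $\bold Q_2$ are ordered by divisibility, not by magnitude, knowing $e\le d$ does not give $K_e\subseteq K_d$: if $K$ has residual degrees $2$ and $3$ at $2$, a class with local field of moduli $K_2$ is not among the $2^3$ classes with field of moduli contained in $K_3$, so your injection into that set of $2^d$ local classes breaks down. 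Two repairs are available: (a) since the local results of \S 2.4 are valid for any choice of embedding $\qbar\hookrightarrow\bar{\bold Q}_2$, choose, for the given $K$, an embedding inducing $\frak p$; then the local field of moduli does land in $K_d$ and you recover the bound $2^d$; or (b) keep the fixed embedding and bound the count by the number of local classes whose field of moduli equals $K_e$ for some $e\le d$, namely $\sum_{e\le d}\eta(e)<2^{d+1}$, which still depends only on $d$ and suffices for the statement as formulated (though not for the sharper bound $2^d$ announced in the introduction).
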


\section*{Appendix A - An explicit construction of the moduli space $\mathcal M_{1,2}$}

This paragraph with a brief review of classical results concerning the moduli spaces $\mathcal M_{1,1}$ and $\mathcal M_{1,2}$. As a coarse moduli space over $k$, we have a natural isomorphism
$$\mathcal M_{1,1}\cong\bold A^1_k,$$
explicitly given by associating to an elliptic curve $E$ its $j$-invariant $j(E)\in k$. The moduli space $\mathcal M_{1,2}$ can be realized as the complement in the weighted projective space
$$\bold P(1,2,3)=\mbox{\bf Proj}(k[a,b,c]),$$
(where $a,b$ and $c$ are homogeneous of restective degree $1,2$ and $3$) of the divisor $\Delta$ defined by the homogeneous equation
$$-c^2(ba^4+8a^2b^2+16b^3-a^3c+27c^2-36abc)=0.$$
More precisely, the elliptic curve associated to an element $[a,b,c]\in\bold P(1,2,3)(k)$ is explicitly given by the affine Weierstra\ss\, equation
$$E_{a,b,c}: Y^2+aXY+cY=X^3+bX^2,$$
the marked points being the origin $0_E$ (the point at infinity) and the point $(0,0)$. The forgetful morphism $\mathcal M_{1,2}\to\mathcal M_{1,1}$ simply maps the element $[a,b,c]$ to the $j$-invariant of the curve $E_{a,b,c}$, given by
$$j(E_{a,b,c})=-\frac {(16b^2+8ba^2+a^4-24ac)^3}{c^2(ba^4+8a^2b^2+16b^3-a^3c+27c^2-36abc)}.$$
For any elliptic curve $E$, there is a canonical morphism
$$\rho:E-\{0_E\}\to\mathcal M_{1,2}$$
whose image is isomorphic to the quotient of $E-\{0_E\}$ under the action of $\mbox{Isom}_k(E)$ and coincides with the fiber $C_j$ above $j=j(E)$ of the natural projection
$$\mathcal M_{1,2}\to\mathcal M_{1,1}.$$
Finally, it is esaily checked that the field of definition of the element $[a,b,c]\in\bold P(1,2,3)(k)$ coincides with the field of moduli of the corresponding $2$-pointed elliptic curve and that it is in fact a field of definition for the curve.

\section*{Appendix B - Higher genus analogues}

\subsection*{B.1. Generalized Lam\'e covers and Lam\'e curves} We now introduce a class of covers of the projective line which generalizes Lam\'e covers. As in \S1, let $k$ denote an algebraically closed field of characteristic $p\geq0$. Fix two positive integers $g$ and $n$ such that $2g+1\leq n$. A {\bf generalized Lam\'e cover} of degree $n$ over $k$ is a tamely ramified cover
$$f:C\to\bold P^1$$ having the following ramification datum:
\begin{enumerate}
\item $f$ is of degree $n$ and unramified outside the points $\infty,0$ and $1$.
\item $f$ is totally ramified above $\infty$ and $0$.
\item There is a unique ramified point in the fiber above $1$, with ramification index $2g+1$. 
\end{enumerate}
The above properties can be summarized in a compact way by saying that $f$ has branch datum
$$(n:n:2g+1,1,\dots,1)$$
Since such a cover is tamely ramified if and only if the characteristic $p$ does not divide the integer $(2g+1)n$, we exclude the case where $p$ divides $2g+1$. For $g=0$, we find the usual cyclic cover $\bold P^1_k\to\bold P^1_k$ unramified outside $2$ points. For $g=1$, we obtain Lam\'e covers, as defined previously. In general, the Riemann-Hurwitz formula implies that $C$ is a curve of genus $g$. Once again, the rigidity criterion of Weil asserts that a generalized Lam\'e cover can be defined over a finite extension of the base field of $k$ and, furhtermore, there exist finitely many $k$-isomorphism classes of Lam\'e covers of bounded degree.

Given a generalized Lam\'e cover $f:C\to\bold P^1_k$ of degree $n$ and genus $g$, denote by $P_\infty$ (resp. by $P_0$) the point lying above $\infty$ (resp. above $0$). Let moreover $P_1$ be the only ramified point lying above $1$. In the following, we say that the $3$-ple $(C,P_\infty,P_0)$ is a {\bf generalized Lam\'e curve of order $n$}. Remark that, as in the case of genus $1$, the cover $f$ (and in particular the point $P_1$) is completely determined by the corresponding generalized Lam\'e curve. By construction, the divisor
$$D=(P_0)-(P_\infty)$$
defines a point of the Jacobian variety $J(C)$ of order dividing $n$. We can moreover consider the canonical image of a generalized Lam\'e curve in the moduli space $\mathcal M_{g,2}(k)$ and define the {\bf generalized Lam\'e loci} $\mathcal L_{g,n}(k)$ and $\mathcal L_g(k)$.

\subsection*{B.2. A class of hyperelliptic curves} We now suppose that $k=\bar{\bold F}_2$. For any positive integer $g$, let $C_g$ be the genus $g$ hyperelliptic curve defined by the affine equation
$$Y^2-Y=X^{2g+1}.$$
Denote by $\infty\in C_g(k)$ the (unique) point at infinity. Let moreover $\sigma\in\mbox{Aut}_k(C_g)$ the canonical involution, defined by
$$\sigma(X,Y)=(X,Y+1).$$
Remark that $\infty$ is the only point of $C_g$ fixed by $\sigma$. Moreover, it is easily checked that the curve $C_g$ is supersingular. We can now state an analogue (but weaker version) of Theorem~\ref{main}; we omit its proof, being almost the same than in genus $1$.

\begin{theoc} For any point $P\in C(k)-\{\infty\}$, the $2$-pointed curve $(C_g,P,\sigma(P))$ is a generalized Lam\'e curve.
\end{theoc}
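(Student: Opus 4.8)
The plan is to mimic, for the pair $(C_g,P,\sigma(P))$, the argument used in Proposition~\ref{ramif}, replacing the translation structure of the elliptic curve by the involution $\sigma$ and its unique fixed point $\infty$. Since $P\in C_g(\bar{\bold F}_2)$ is defined over a finite field and $\sigma$ is defined over $\bold F_2$, the degree-zero divisor $D=(\sigma(P))-(P)$ defines a class in the finite group $J(C_g)(\bold F_q)$ for $q$ large enough, hence $D$ is torsion; let $n$ be its exact order. Because $C_g$ is supersingular, the group $J(C_g)(\bar{\bold F}_2)$ has trivial $2$-primary part, so $n$ is odd, which is exactly what guarantees tameness at the two totally ramified points. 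Choosing $f\in\bar{\bold F}_2(C_g)^\times$ with $(f)=n(\sigma(P))-n(P)$, the induced cover $f:C_g\to\bold P^1$ has degree $n$ and is totally ramified, with index $n$, above $\infty$ (at $P$) and above $0$ (at $\sigma(P)$). Applying $\sigma^*$ to the divisor gives $(\sigma^*f)=-(f)$, so $\sigma^*f=\lambda/f$ for some constant $\lambda$; evaluating at the fixed point $\infty$ (where $f(\infty)\neq0,\infty$, since $\infty\neq P,\sigma(P)$) and rescaling $f$ so that $f(\infty)=1$, we obtain the normalization $\sigma^*f=1/f$.

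Next I would study the logarithmic differential $\omega=df/f$. As in Proposition~\ref{ramif}, its poles are exactly the two simple poles at $P$ and $\sigma(P)$ (simple because $n$ is odd), so its divisor of zeros has degree $(2g-2)+2=2g$; these zeros are precisely the ramification points of $f$ lying above points $\neq 0,\infty$, and the characteristic-$2$ computation recalled in Proposition~\ref{ramif} shows that $\omega$ vanishes to order at least $2$ at each of them. From $\sigma^*f=1/f$ one gets $\sigma^*\omega=\omega$, so the zero divisor of $\omega$ is $\sigma$-stable. The whole statement then reduces to a single local assertion: that $\infty$ is the unique zero of $\omega$ and that $v_\infty(\omega)=2g$, equivalently that $f$ is ramified at $\infty$ with index $2g+1$ (which is odd, hence tame). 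This accounts for all of the degree $2g$, forces the branch datum $(n:n:2g+1,1,\dots,1)$ with automatically $n\geq 2g+1$, and exhibits $f$ as a generalized Lam\'e cover.

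The technical heart, and the step I expect to be the main obstacle, is this local computation at $\infty$. I would take the uniformizer $t=X^g/Y$ and use the relation $Y^2+Y=X^{2g+1}$ to obtain the clean identity
$$\sigma^*t-t=\frac{X^g}{Y(Y+1)}=X^{-(g+1)},$$
so that $v_\infty(\sigma^*t-t)=2g+2$. Writing $f=1+h$ with $m=v_\infty(h)\geq 1$ and $h=a_mt^m+\cdots$, the functional equation $\sigma^*f=1/f$ becomes, in characteristic $2$, $h+\sigma^*h=h\,\sigma^*h$, whose right-hand side has valuation $2m$. Expanding $\sigma^*h-h=h(\sigma^*t)-h(t)$ in powers of $t$ shows that its leading contribution is $m\,a_m\,t^{m-1}(\sigma^*t-t)$, of valuation $m+2g+1$. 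If $m$ is odd this term survives, and matching valuations forces $m+2g+1=2m$, i.e. $m=2g+1$; if $m$ is even the leading term vanishes, the left-hand side has valuation $\geq m+2g+2$, and matching with $2m$ would force $m\geq 2g+2$, whence $v_\infty(\omega)=v_\infty(dh)\geq m>2g$, contradicting that the zero divisor of $\omega$ has total degree $2g$. Hence $m=2g+1$, so $v_\infty(\omega)=2g$ and $\infty$ is the only zero of $\omega$, which identifies $(C_g,P,\sigma(P))$ as the generalized Lam\'e curve associated to $f$.
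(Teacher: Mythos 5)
Your proof is correct and follows essentially the approach the paper intends: the paper omits the proof of this theorem, saying it is ``almost the same'' as the genus-$1$ argument of Proposition~\ref{ramif}, and your argument is exactly that argument adapted to genus $g$ --- the logarithmic differential $\omega=df/f$ with its two simple poles, the symmetry $\sigma^*f=1/f$ at the fixed point of $\sigma$, and the local expansion there using the uniformizer $t=X^g/Y$ with $v_\infty(\sigma^*t-t)=2g+2$. The one point where higher genus genuinely demands more than the genus-$1$ template --- ruling out wild ramification at $\infty$, since for $g>1$ the degree count on the zeros of $\omega$ no longer forces a single zero by itself --- is handled correctly by your odd/even dichotomy on $m=v_\infty(f-1)$ combined with the bound $v_\infty(\omega)\leq 2g$.
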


Denote by  $X_g\cong\bold P^1_k$ be the quotient of $C_g$ under the action of the stabilizer of the point $\infty$ in $\mbox{Aut}_k(C_g)$.

\begin{coroc} The set $X_g(k)$ naturally parametrizes $k$-isomorphism classes of generalized Lam\'e covers.\end{coroc}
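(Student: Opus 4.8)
The plan is to follow the genus one argument closely. Theorem B.2.1 attaches to every $P\in C_g(k)-\{\infty\}$ the generalized Lam\'e curve $(C_g,P,\sigma(P))$, hence a generalized Lam\'e cover $f_P\colon C_g\to\bold P^1$; this defines a map $\Phi$ from $C_g(k)-\{\infty\}$ to the set of isomorphism classes of generalized Lam\'e covers. The whole content of the corollary is that the fibers of $\Phi$ are exactly the fibers of the quotient map $C_g\to X_g$, and that $\Phi$ is onto.

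First I would isolate the structural fact that drives the descent. The involution $\sigma(X,Y)=(X,Y+1)$ is the hyperelliptic involution of $C_g$, and its only fixed point is $\infty$: a finite fixed point would satisfy $Y=Y+1$, which is impossible in characteristic $2$. For $g\geq2$ the hyperelliptic involution is central in $\mbox{Aut}_k(C_g)$ (the case $g=1$ being the supersingular elliptic curve handled in the body of the paper), so every $\gamma\in\mbox{Aut}_k(C_g)$ commutes with $\sigma$ and therefore permutes its unique fixed point; that is, every automorphism of $C_g$ fixes $\infty$. Hence the stabilizer of $\infty$ is the full group and $X_g=C_g/\mbox{Aut}_k(C_g)$. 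I would then use this twice. For the descent: if $P'=\gamma(P)$ with $\gamma\in\mbox{Aut}_k(C_g)$, then $\gamma(\sigma(P))=\sigma(\gamma(P))=\sigma(P')$, so $\gamma$ is an isomorphism of $2$-pointed curves $(C_g,P,\sigma(P))\cong(C_g,P',\sigma(P'))$, hence of the associated covers, and $\Phi$ is constant on $\mbox{Aut}_k(C_g)$-orbits. For injectivity: an isomorphism $f_P\cong f_{P'}$ is induced by some $\gamma\in\mbox{Aut}_k(C_g)$ carrying the two totally ramified points of $f_P$ onto those of $f_{P'}$, so $\gamma(P)\in\{P',\sigma(P')\}$; as $\sigma$ is itself an automorphism, $P$ and $P'$ have the same image in $X_g$. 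Together these identify the fibers of $\Phi$ with those of $C_g\to X_g$, giving an injection of $X_g(k)$, with the image of the fixed point $\infty$ removed, into the isomorphism classes of covers (the removed point accounts for the discrepancy with the $\bold A^1(k)$ appearing in the genus one statement).

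Surjectivity is the only step that is not formal, and it is where I expect the genuine difficulty to lie: it requires showing that the source of an \emph{arbitrary} generalized Lam\'e cover $f\colon C\to\bold P^1$ is isomorphic to $C_g$. I would attack it by the differential computation of Proposition~\ref{ramif}: the logarithmic form $\omega=df/f$ has simple poles at the two totally ramified points (the residues are $\pm n\neq0$ since $n$ is odd) and, because $2g+1$ is prime to the characteristic, a single zero of order $2g$ at the ramified point $P_1$ above $1$, so that $(\omega)=2g(P_1)-(P_0)-(P_\infty)$. The decisive point, exactly as in genus one---where an ordinary curve forces \emph{wild} ramification of index $2$ instead of the tame index $3$, cf.\ Proposition~\ref{ramif}---is that tameness of the index $2g+1$ point is an extremely rigid condition in characteristic $2$, and I would argue that it can be met only by the Artin--Schreier curve $C_g$, reconstructing from $f$ an involution whose quotient is $\bold P^1$ ramified at a single (wild) point. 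Granting this uniqueness of the source, every generalized Lam\'e cover is some $f_P$ and $\Phi$ is onto, which finishes the proof. This uniqueness is precisely the higher genus analogue of the hard implication ``a Lam\'e point exists $\Rightarrow$ the curve is supersingular'' of Theorem~\ref{main}, and is exactly the content that Theorem B.2.1, being only the one directional (weaker) analogue, does not hand us directly; it is the main obstacle.
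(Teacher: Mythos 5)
Your descent and injectivity arguments are sound, and they constitute essentially all of what the corollary actually asserts: granted Theorem B.2.1, the centrality of the hyperelliptic involution $\sigma$ forces every element of $\mbox{Aut}_k(C_g)$ to fix its unique fixed point $\infty$, so automorphisms send the pair $(P,\sigma(P))$ to $(\gamma(P),\sigma(\gamma(P)))$, and conversely an isomorphism of covers carries the marked totally ramified points to one another; this identifies the fibers of $P\mapsto[f_P]$ with the fibers of $C_g\to X_g$. This matches what the paper has in mind (it omits the proof precisely because, as for Theorem B.2.1, it is ``almost the same'' as the genus $1$ case), including your observation that the image of $\infty$ must be discarded.

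The genuine problem is your third step. You read the corollary as asserting a \emph{bijection} onto the set of all isomorphism classes of generalized Lam\'e covers, declare surjectivity to be ``the only step that is not formal,'' and then do not prove it: the passage ``I would argue that it can be met only by the Artin--Schreier curve $C_g$\dots Granting this uniqueness\dots'' is a hope, not an argument, so on your own reading the proof is incomplete at exactly the point you identify as essential. Moreover, this reading contradicts the paper itself: the remark immediately following the corollary states that ``there may exist generalized Lam\'e covers which do not arise in this way (which is never the case for $g=1$).'' In other words, the corollary claims only that $X_g(k)$ parametrizes a family of pairwise non-isomorphic generalized Lam\'e covers (a well-defined injection into the set of classes), not that every class occurs. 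The mechanism you invoke from genus $1$ does not transpose: there, surjectivity comes from Proposition~\ref{ramif} and Theorem~\ref{main}, i.e.\ from the ordinary/supersingular dichotomy on a genus $1$ source whose isomorphism class is \emph{a priori} unconstrained of moduli dimension $1$; in genus $g\geq2$ the source of an arbitrary generalized Lam\'e cover ranges over a $(3g-3)$-dimensional moduli space, and the divisor relation $(\omega)=2g(P_1)-(P_0)-(P_\infty)$ for $\omega=df/f$ is far from forcing $C\cong C_g$. This is precisely why the paper retreats to the weaker, one-directional statement; your proposal fails by attempting (and then only postulating) the converse.
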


The main inconvenient of this result is the fact that there may exist generalized Lam\'e covers which do not arise in this way (which is never the case for $g=1$ ). A more detailed description of the moduli space $\mathcal M_{g,2}$ would lead to some more precise statements.

\bibliographystyle{smfalpha}
\bibliography{Belyidegree}

\providecommand{\bysame}{\leavevmode ---\ }
\providecommand{\og}{``}
\providecommand{\fg}{''}
\providecommand{\smfandname}{et}
\providecommand{\smfedsname}{\'eds.}
\providecommand{\smfedname}{\'ed.}
\providecommand{\smfmastersthesisname}{M\'emoire}
\providecommand{\smfphdthesisname}{Th\`ese}
\begin{thebibliography}{BvdW04}

\bibitem[Bal87]{Baldassarri}
{\scshape F.~Baldassarri} -- {\og Algebraic solutions of the {L}am\'e equation
  and torsion of elliptic curves\fg}, \emph{Proceedings of the {G}eometry
  {C}onference ({M}ilan and {G}argnano, 1987)}, vol.~57, 1987, p.~203--213
  (1989).

\bibitem[BvdW04]{Beukers}
{\scshape F.~Beukers {\normalfont \smfandname} A.~van~der Waall} -- {\og Lam\'e
  equations with algebraic solutions\fg}, \emph{J. Differential Equations}
  \textbf{197} (2004), no.~1, p.~1--25.

\bibitem[Dah07]{Dahmen}
{\scshape S.~R. Dahmen} -- {\og Counting integral {L}am\'e equations by means
  of dessins d'enfants\fg}, \emph{Trans. Amer. Math. Soc.} \textbf{359} (2007),
  no.~2, p.~909--922 (electronic).

\bibitem[LZ06]{Litcanu-Zapponi}
{\scshape R.~Li{\c{t}}canu {\normalfont \smfandname} L.~Zapponi} -- {\og
  Properties of {L}am\'e operators with finite monodromy\fg}, Groupes de
  {G}alois arithm\'etiques et diff\'erentiels, S\'emin. Congr., vol.~13, Soc.
  Math. France, Paris, 2006, p.~235--252.

\bibitem[sga03]{sga1}
\emph{Rev\^etements \'etales et groupe fondamental ({SGA} 1)} -- Documents
  Math\'ematiques (Paris) [Mathematical Documents (Paris)], 3, Soci\'et\'e
  Math\'ematique de France, Paris, 2003, S{\'e}minaire de g{\'e}om{\'e}trie
  alg{\'e}brique du Bois Marie 1960--61. [Algebraic Geometry Seminar of Bois
  Marie 1960-61], Directed by A. Grothendieck, With two papers by M. Raynaud,
  Updated and annotated reprint of the 1971 original [Lecture Notes in Math.,
  224, Springer, Berlin; MR0354651 (50 \#7129)].

\bibitem[Wei56]{Weil}
{\scshape A.~Weil} -- {\og The field of definition of a variety\fg},
  \emph{Amer. J. Math.} \textbf{78} (1956), p.~509--524.

\bibitem[Zap97]{Zapponi3}
{\scshape L.~Zapponi} -- {\og Dessins d'enfants en genre 1\fg}, Geometric
  {G}alois actions, 2, London Math. Soc. Lecture Note Ser., vol. 243, Cambridge
  Univ. Press, Cambridge, 1997, p.~79--116.

\bibitem[Zap98]{Zapponi2}
\bysame , \emph{Dessins d'enfants et action galoisienne}, Th\`ese de doctorat,
  Universit\'e de Franche-Comt\'e, Besan\c con, 1998.

\bibitem[Zap06a]{Zapponi5}
\bysame , \emph{Lam\'e curves with bad reduction}, 2006, Preprint aviable at
  the address \url{http://arxiv.org/pdf/math.AG/0611429.pdf}.

\bibitem[Zap06b]{Zapponi6}
\bysame , \emph{Lam\'e points on elliptic curves}, 2006, Preprint aviable at
  the address
  \url{http://people.math.jussieu.fr/\~zapponi/leo_divers/Lame_points.pdf}.

\end{thebibliography}

\end{document}